\newtheorem{theorem}{Theorem}
\newtheorem{question}{Question}
\newtheorem{corollary}[theorem]{Corollary}
\newtheorem{lemma}[theorem]{Lemma}
\newtheorem{proposition}[theorem]{Proposition}
\newtheorem{definition}[theorem]{Definition}
\newtheorem{remark}[theorem]{Remark}
\numberwithin{theorem}{section}
\numberwithin{equation}{section}
\newcommand{\A}{\mathcal{A}}
\newcommand{\Comp}{\mathbb{C}}
\newcommand{\g}{\mathbb{G}}
\newcommand{\n}{\mathbb{N}}
\newcommand{\R}{\mathbb{R}}
\newcommand{\z}{\mathbb{Z}}
\begin{document}
\title{On the Similarity Problem for Locally Compact Quantum Groups}

\author{Michael Brannan}
\author{Sang-Gyun Youn}

\address{Michael Brannan:  Department of Mathematics, Mailstop 3368, Texas A$\&$M University,
 College Station, TX 77843-3368, USA}
\email{mbrannan@math.tamu.edu}

\address{Sang-gyun Youn: Department of Mathematical Sciences, Seoul National University,
San56-1 Shinrim-dong Kwanak-gu, Seoul 151-747, Republic of Korea}
\email{yun87654@snu.ac.kr}

\keywords{Locally compact quantum group, amenability, completely bounded homomorphism, corepresentation, Day-Dixmier property}
\thanks{2010 \it{Mathematics Subject Classification}.
\rm{20G42, 22D12, 22D15, 46L07, 46L89, 81R50}}

\begin{abstract}
A well-known theorem of Day and Dixmier states that any uniformly bounded representation of an amenable locally compact group $G$ on a Hilbert space is similar to a unitary representation. Within the category of locally compact quantum groups, the conjectural analogue of the Day-Dixmier theorem is that every completely bounded Hilbert space representation of the convolution algebra of an amenable locally compact quantum group should be similar to a $\ast$-representation.  We prove that this conjecture is false for a large class of non-Kac type compact quantum groups, including all $q$-deformations of compact simply connected semisimple Lie groups. On the other hand, within the Kac framework, we prove that the Day-Dixmier theorem does indeed hold for several new classes of examples, including amenable discrete quantum groups of Kac-type.

\end{abstract}

\maketitle

\section{Introduction}

For a locally compact group $G$, the question of the  unitarizability of uniformly bounded representation has  quite a long history. The begining of this story started with the following general result of Day and Dixmier, extending previous work of Sz.-Nagy \cite{Na47} on the particular case $G=\z$. 

\begin{theorem}[Day-Dixmier Theorem \cite{Da50},\cite{Di50}]
If a locally compact group $G$ is amenable, then every uniformly bounded Hilbert space representation $\pi:G\rightarrow B(H_{\pi})$ admits an invertible $T\in B(H_{\pi})$ such that $T\circ \pi(\cdot) \circ T^{-1}$ is a unitary representation.
\end{theorem}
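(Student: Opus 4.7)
The plan is to exploit amenability of $G$ to average the sesquilinear forms $\langle \pi(\cdot)\xi,\pi(\cdot)\eta\rangle$ against an invariant mean, producing an equivalent $\pi$-invariant inner product on $H_\pi$; the square root of the operator that implements the change of inner product will be the desired similarity. Setting $C := \sup_{g\in G}\|\pi(g)\|<\infty$, the identity $\pi(g)\pi(g^{-1}) = I$ forces $\|\pi(g)\xi\|\geq C^{-1}\|\xi\|$, and hence
\[
C^{-2}\|\xi\|^2 \,\le\, f_{\xi,\xi}(g) \,\le\, C^2\|\xi\|^2, \qquad f_{\xi,\eta}(g) := \langle \pi(g)\xi, \pi(g)\eta \rangle,
\]
for every $g\in G$ and $\xi,\eta \in H_\pi$. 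In particular each $f_{\xi,\eta}$ lies in $C_b(G)$ (using strong continuity of $\pi$) with $\|f_{\xi,\eta}\|_\infty \leq C^2\|\xi\|\|\eta\|$.

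Next, using amenability, I would fix a right-invariant mean $m$ on $C_b(G)$ and define
\[
[\xi,\eta] := m\bigl(f_{\xi,\eta}\bigr), \qquad \xi,\eta\in H_\pi.
\]
Linearity of $m$ makes $[\cdot,\cdot]$ a bounded sesquilinear form on $H_\pi$; applying $m$ to the pointwise bounds above gives $C^{-2}\|\xi\|^2 \leq [\xi,\xi] \leq C^2\|\xi\|^2$, so $[\cdot,\cdot]$ is an inner product equivalent to the original one. The crucial $\pi$-invariance $[\pi(h)\xi, \pi(h)\eta] = [\xi,\eta]$ is then immediate from $f_{\pi(h)\xi,\pi(h)\eta}(g)=f_{\xi,\eta}(gh)$ together with right-invariance of $m$.

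To convert this into the required similarity, apply the Riesz representation theorem to obtain a positive, bounded, invertible $S \in B(H_\pi)$ with $[\xi,\eta] = \langle S\xi, \eta\rangle$, and set $T := S^{1/2}$. Then for every $g \in G$ and $\xi, \eta \in H_\pi$,
\[
\langle T\pi(g)\xi, T\pi(g)\eta\rangle = [\pi(g)\xi,\pi(g)\eta] = [\xi,\eta] = \langle T\xi,T\eta\rangle,
\]
so $T\pi(g)T^{-1}$ is unitary. The substantive content of the argument is concentrated entirely at the point where amenability is invoked: the existence of the right-invariant mean on $C_b(G)$ is what breaks the symmetry between amenable and non-amenable $G$. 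Once that mean is in hand, the remaining steps (continuity/boundedness of $f_{\xi,\eta}$, sesquilinearity and positivity of $[\cdot,\cdot]$, the passage to $T$ via Riesz) are routine verifications, and indeed one expects that a conceptual obstacle only reappears when trying to port this averaging procedure to the quantum setting, where the analogue of $C_b(G)$ is noncommutative and the notion of ``invariant mean on bounded elements'' must be interpreted with care.
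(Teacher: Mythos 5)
Your argument is correct and is precisely the classical averaging proof of Day and Dixmier that the paper merely cites without reproducing: average the forms $\langle\pi(\cdot)\xi,\pi(\cdot)\eta\rangle$ against an invariant mean on $C_b(G)$, obtain an equivalent $\pi$-invariant inner product, and implement the change of inner product by $T=S^{1/2}$. All steps check out (the lower bound $\|\pi(g)\xi\|\geq C^{-1}\|\xi\|$, right-invariance giving $[\pi(h)\xi,\pi(h)\eta]=[\xi,\eta]$, and unitarity of $T\pi(g)T^{-1}$ since it is an invertible isometry), so no comparison beyond this is needed.
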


Since there is a bijective correspondence between uniformly bounded representations $\pi:G\rightarrow B(H_{\pi})$ and bounded representations $\pi:L^1(G) \to B(H_\pi)$ (of the associated convolution algebra $L^1(G)$), the above celebrated work can be concisely described in terms of the so-called {\it similarity property} for $L^1(G)$. More precisely, we have that
\begin{enumerate}
\item every contractive representation $\pi:L^1(G) \to B(H_\pi)$ is a $\ast$-representation, and
\item  under the assumption of amenability of $G$, every bounded representation  $\pi:L^1(G) \to B(H_\pi)$  is similar to a $\ast$-representation.
\end{enumerate}

The question of whether the converse to the Day-Dixmier theorem holds is called Dixmier's problem and it is still open (although there are some notable partial results \cite{EhMa55},\cite{Pi07},\cite{EpMo09} and \cite{MoOz10}). A remarkable partial answer to Dixmier's problem was obtained by G. Pisier \cite{Pi98} for discrete groups and N. Spronk \cite{Sp02} for the general case by requiring a norm condition $\left \|T\right\| \left \|T^{-1}\right \|\leq \left \|\pi\right\|^2$. In other words, amenability of $G$ is equivalet to $L^1(G)$ having the the similarity property with {\it (completely bounded) similarity degree} $d_{cb}(L^1(G))\leq 2$. For more details, see Subsection \ref{pre:degree}.

Within the framework of locally compact quantum groups, it is natural to ask whether such known results generalize.  More precisely, let $\g=(L^{\infty}(\g),\Delta,\varphi,\psi)$ be a locally compact quantum group and let $L^1(\g) = L^\infty(\g)_*$ denote the associated convolution algebra.

\begin{question}\label{Q1}
Is every (completely) contractive representation $\pi:L^1(\g) \to B(H_\pi)$ automatically a $*$-representation?
\end{question}
\begin{question}\label{Q2}
Is every (completely) bounded representation $\pi:L^1(\g) \to B(H_\pi)$  similar to a $\ast$-representation, at least when $\g$ is amenable?
\end{question}

In the above questions, we impose the condition that our representations are completely bounded maps.  This is natural when working with genuine quantum groups, since for ordinary groups $G$, all bounded representations $\pi:L^1(G) = \text{MAX}(L^1(G)) \to B(H_\pi)$ are automatically completely bounded.  Moreover, in the quantum  case, any representation of $L^1(\g)$ that is  similar to a $\ast$-representation is automatically completely bounded.   We also note that the general assumption of complete boundedness on a representation $\pi:L^1(\g) \to B(H_\pi)$ is not redundant: \cite{ChSa13} established the existence of examples of bounded
 $\pi:L^1(\g) \to B(H_\pi)$ which are not completely bounded.  See also \cite{BrDaSa13}.   This leads us to the following definition.  

\begin{definition}
Let $\g$ be a locally compact quantum group.  We say that $\g$ (or $L^1(\g)$) has {\it the Day-Dixmier property} if the answers to Questions \eqref{Q1}-\eqref{Q2} are affirmative.
\end{definition}

The first investigation into the Day-Dixmier property for quantum groups was in \cite{BrSa10}. Here, the authors  considered the Fourier algebra $A(G)$ of a locally compact group $G$, which corresponds to the convolution algebra of the co-commutative dual quantum group $\widehat G$ (which turns out to always be amenable).  There, they showed that the Day-Dixmier property on $A(G)$ holds for all SIN(=small invariant neighborhood) groups.  They also observed more generally that for any locally compact group $G$, and any completely bounded representation $\pi:A(G) \to B(H_\pi)$, Question \ref{Q2} has an affirmative answer if and only if a certain related map $\check{\pi}$ is completely bounded. Here, $\check{\pi}$ is the (anti-)representation of $A(G)$ defined by $w\mapsto \pi(\check{w})$ where $\check{w}(g)=w(g^{-1})$.

For genuine locally compact quantum groups,  \cite{BrDaSa13} generalized the affirmative answer of Question \ref{Q2} on $A(G)$ to the case of amenable locally compact quantum groups. More precisely, they showed that any completely bounded representation $\pi:L^1(\g)\rightarrow B(H_{\pi})$ for which $\left \|\check{\pi}\right \|_{cb}<\infty$ is similar to a $\ast$-representation. Moreover, for compact quantum groups $\g$ of Kac type, the authors of \cite{BrDaSa13} showed that Day-Dixmier theorem holds in full generality without assumption on $\check{\pi}$. Here, $\check{\pi}$ is the (a priori unbounded) anti-representation of $L^1(\g)$ defined by $w\mapsto \pi((w^*)^{\sharp})$ where $\langle (w^*)^{\sharp},x\rangle=\langle w,S(x)\rangle$ and $S$ is the {\it antipode} map.  

In summary: the results of \cite{BrSa10, BrDaSa13} show that, with the exception of some small classes of amenable quantum groups (i.e., classical amenable groups, duals of SIN groups, compact Kac algebras, ...), establishing the Day-Dixmier property seems to require one to not only have complete boundedness of a given representation $\pi$, but also complete boundedness of the affiliated map $\check{\pi}$.  It is quite natural to ask whether the additional complete boundedness assumption on $\check{\pi}$ is in fact required.    Quite recently,  \cite{LeSaSp16} showed that the complete boudnedness of $\check{\pi}$ was indeed automatic for a large class of Fourier algebras.  More precisely, they tackled the similarity problem for $A(G)$ using tools more directly connected to Pisier's machinery \cite{Pi98}, proving that for a broad class of groups, $A(G)$ has the Day-Dixmier property with completely bounded similarity degree $d_{cb}(A(G)) \le 2$.  This work provides significant evidence to suggest that complete boundedness assumptions on $\check{\pi}$ are indeed unnecessary, at least for group duals $\widehat G$.  

Our first main objective in this paper is to show (by means of explicit examples) that the appearence of the anti-representation $\check{\pi}$ in the analysis of the Day-Dixmier property is indeed essential when working in the framework of general locally compact quantum groups.  More precisely, in Theorem \ref{thm:subexponential}, we show that any compact quantum group $\g$ with the Day-Dixmier property whose dual $\widehat \g$ has subexponential growth must be of Kac type.  This result, in particular, implies (cf. Corollary \ref{cor:Gq} ) that if $G$ is any compact simply connected semisimple Lie group, then its Drinfeld-Jimbo deformation $G_q$ ($0 < q < 1$) can never have the Day-Dixmier property.   Based on these results, we conjecture that every amenable locally compact quantum group with the Day-Dixmier property is automatically of Kac type. 

Our second objective in this paper is to establish some new classes of amenable Kac-type quantum groups which have the Day-Dixmier property. The examples include all of amenable discrete quantum groups of Kac-type and the duals of certain crossed products that are non-compact, non-discrete, non-commutative and non-cocommutative in general.

All of these new examples arise as consequences of Theorem \ref{thm:tracial-mean}, which follows along the same line of attack as the prior works \cite{BrSa10, BrDaSa13} where the above assumptions allow one to show that a given completely bounded representation $\pi:L^1(\g)\to B(H)$ automatically extends to a completely bounded homomorphism $\Phi$ from the enveloping C$^\ast$-algebra $C_0^u(\widehat \g)$ into $B(H)$ satisfying $\|\Phi\|_{cb} \le \|\pi\|_{cb}^2$.  Since by coamenability $C^u_0(\widehat \g)=C_0(\widehat{\g})$ is nuclear and nuclear C$^\ast$-algebras have completely bounded similarity degree 2, the fact that $d_{cb}(L^1(\g)) \le 4$ easily follows. 

One would hope for a better result in  Theorem \ref{thm:tracial-mean}, namely that $d_{cb}(L^1(\g)) \leq 2$.  We explain in Appendix \ref{appendix}, using different techniques more in line with \cite{Pi98, LeSaSp16}, how one can obtain $d_{cb}(L^1(\g)) \leq 2$ if $\g$ is a compact Kac algebra or an amenable discrete Kac algebra. We also note that $d_{cb}(L^1(\g))=1$ if and only if $L^{\infty}(\g)$ is finite dimensional in those cases.

The remainder of the paper is organized as follows: In Section \ref{section:prelim}, we introduce some of the basics of the theory of locally compact quantum groups and the completely bounded similarity degree that are needed for our work. Then we show in Section \ref{section:nonKac} that the Day-Dixmier property does not generally hold  within the category of compact quantum groups, and in Section \ref{section:positive} we establish the Day-Dixmier property for a class of examples with $\g$ is amenable and of Kac type with tracial left Haar weight.   Finally, in Appendix \ref{appendix}, we explain how to improve the similarity degree for some of the examples of Section \ref{section:positive}. 

\subsection*{Acknowledgements}   M. Brannan was supported by NSF Grant  DMS-1700267.  S. Youn was supported by a TJ Park Science Fellowship.   The authors would like to thank Hun Hee Lee for encouragement and comments. The authors are also grateful to Ebrahim Samei and Xiao Xiong who pointed out an error in an earlier version.

\section{Preliminaries} \label{section:prelim}

\subsection{Locally compact quantum groups}

We refer the reader to \cite{KuVa00, KuVa03, Va01} and the book \cite{Ti08} for an introduction to operator algebraic locally compact quantum groups.  Let us recall that a (von Neumann algebraic) 
{\it locally compact quantum group} is a von Neumann algebra $L^\infty(\g)$ equipped with a coassociative coproduct and left and right Haar weights.  The coproduct is a unital normal
$\ast$-homomorphism $\Delta : L^\infty (\g )\to L^\infty (\g )\overline{\otimes}L^\infty (\g )$
satisfying the coassociativity condition
\begin{align*}
(\Delta\otimes \mathrm{id} )\Delta = (\mathrm{id}\otimes\Delta )\Delta.
\end{align*}
The left and right Haar weights are normal semifinite faithful weights $\varphi$ and $\psi$
on $L^\infty (\g )$ such that for every $w\in L^\infty (\g )_*^+$ one has
\begin{align*}
\varphi ((w\otimes\mathrm{id} )\Delta (a)) = \varphi (a) w (1)
\end{align*}
for all $a\in L^\infty (\g )^+$ with $\varphi (a) < \infty$ and
\begin{align*}
\psi ((\mathrm{id}\otimes w )\Delta (a)) = \psi (a) w (1)
\end{align*}
for all $a\in L^\infty (\g )^+$ with $\psi (a) < \infty$.  
The predual of $L^\infty(\g)$ is written as $L^1(\g)$, and becomes a completely contractive 
Banach algebra with respect to the {\it convolution product} 
\[
w_1 \star w_2 = (w_1 \otimes w_2) \circ \Delta , \qquad w_1, w_2 \in L^1(\g).
\]

Associated to $\g$ is a canonical weakly dense sub-C$^*$-algebra of $L^\infty (\g )$,
written $C_0 (\g )$, which plays the role of the C$^*$-algebra of continuous functions vanishing 
at infinity in the case of ordinary groups.  Also, we denote by $M(\g)$ the dual space of $C_0(\g)$. The coproduct restricts to a unital $\ast $-homomorphism $\Delta: C_0(\g) \to M(C_0(\g) \otimes C_0(\g))$. 
The algebras $C_0 (\g )$ and $L^\infty (\g )$
are standardly represented on the GNS Hilbert space $L^2 (\g )$ associated to the left Haar weight.
In the case of a locally compact group, the notations $L^\infty (\g )$, $L^1(\g)$, $C_0 (\g )$, $L^2 (\g )$ and $M(\g)$ have their ordinary meaning.

There is a (left) {\it fundamental unitary operator} $W$ on $L^2(\g)\otimes L^2(\g)$ which
satisfies the {\it pentagonal relation} $W_{12} W_{13} W_{23} = W_{23} W_{12}$
and unitarily implements the coproduct $\Delta$ on $L^\infty(\g)$ via the formula 
$\Delta(x) = W^*(1\otimes x)W$.  Using $W$ one has 
$C_0(\g) =  \overline{\{(\mathrm{id} \otimes w)W: w \in B(L^2(\g))_*\}}^{\|\cdot\|}$, 
and one can define the {\it antipode} of $\g$ as the (generally only densely defined) 
linear operator $S$ on $C_0 (\g)$ (or $L^\infty(\g)$) satisfying the identity $(S \otimes \mathrm{id} )W = W^*$ informally.  
The antipode admits a polar decomposition $S = R\circ\tau_{-i/2}$ where $R$ is an antiautomorphism 
of $L^\infty (\g )$
(the {\it unitary antipode}) and $\{ \tau_t \}_{t\in\R}$ is a one-parameter
group of automorphisms (the {\it scaling group}).
In the case of a locally compact group, the scaling group is trivial
and the antipode is the antiautomorphism sending a function $f \in C_0(G)$ to the function $s\mapsto f(s^{-1} )$.  
Using the antipode $S$ one can endow the convolution algebra $L^1(\g)$ with a densely defined 
involution by considering the norm-dense subalgebra $L^1_\sharp (\g)$ of $L^1(\g)$ consisting
of all  $w\in L^1(\g)$ for which there exists an $w^\sharp \in L^1(\g)$ with
$\langle w^\sharp, x\rangle = \overline{ \langle w, S(x)^* \rangle}$ for each $x\in \mathcal{D}(S)$.
It is known from  \cite{Ku01} and Section~2 of \cite{KuVa03} that $L^1_\sharp(\g)$ 
is an involutive Banach algebra with involution  $w\mapsto w^\sharp$  
and norm $\|w\|_\sharp = \mbox{max}\{\|w\|, \|w^\sharp\|\}$.  

Associated to any locally compact quantum group $\g$ is its 
{\it dual locally compact quantum group} $\widehat{\g}$, whose associated algebras, 
coproduct, and fundamental unitary are given by 
$C_0(\widehat \g) = 
\overline{\{(w \otimes \mathrm{id})W: w \in B(L^2(\g))_*\}}^{\|\cdot\|} \subseteq B (L^2(\g))$, 
$L^\infty(\widehat{\g}) = C_0(\widehat{\g})''$ in $B(L^2(\g))$, $\hat \Delta (x) = \hat{W}^*(1\otimes x)\hat W$, 
and $\hat W = \Sigma W^* \Sigma$.  
Then in fact $W \in M(C_0(\g) \otimes C_0(\widehat{\g}))$,
and the {\it Pontryagin duality theorem} asserts that  
the bidual quantum group $\widehat{\widehat{\g}}$ is canonically identified with the original 
quantum group $\g$.  One says that  a locally compact quantum group $\g$ 
is {\it compact} if $C_0 (\g )$ is unital, and {\it discrete} if $\widehat{\g}$ is compact, 
which is equivalent to $C_0 (\g )$ being a direct sum of matrix algebras. 

For a locally compact quantum group $\g$, we can always assume that the left and 
right Haar weights are related by $\psi = \varphi \circ R$, where $R$ is the unitary antipode.  
If the left and right Haar weights $\varphi$ and $\psi$ of $\g$ coincide then we say 
that $\g$ is {\it unimodular}. In general, the failure of $\psi$ to be left-invariant 
is measured by the {\it modular element}, which is a strictly positive element $\delta$ 
affiliated with $L^\infty(\g)$ satisfying the identities  $\Delta(\delta) = \delta \otimes \delta$ 
and $\displaystyle \psi(\cdot) = \varphi(\delta^{\frac{1}{2}}\cdot \delta^{\frac{1}{2}})$.    
Compact quantum groups are always unimodular, and the corresponding Haar weight can always 
be chosen to be a state. Although discrete groups are always unimodular, 
discrete quantum groups need not be. We recall that a locally compact quantum group $\g$ 
is said to be of {\it Kac type} (or a {\it Kac algebra}) if $\g$ has trivial scaling group, and $R\sigma_t =\sigma_{-t}R$, where $(\sigma_t)_t$ is the modular automorphism group associated to  $\varphi$.  For discrete quantum groups $\g$, being of Kac type is equivalent to the traciality of the Haar state on $\widehat{\g}$.

The Fourier transform on $L^1(\g)$ is given by $\lambda=\mathcal{F}:L^1(\g)\rightarrow C_0(\widehat{\g}), w\mapsto (w\otimes \mathrm{id})(W)$. Also, it extends to an onto isometry $\mathcal{F}_2:L^2(\g)\rightarrow L^2(\widehat{\g})$. More precisely, $\mathcal{I}=\left \{x\in \mathfrak{n}_{\varphi}\Bigr| \exists _x\varphi \in L^1(\g)~\langle y^*,_x \varphi\rangle_{L^{\infty}(\g),L^1(\g)}=\langle x,y \rangle_{L^2(\g)} ~\forall y\in \mathfrak{n}_{\varphi}\right\}$ and $\left \{ \lambda(_x \varphi)\Bigr| x\in \mathcal{I}\right\}$ form norm-dense cores for $L^2(\g)$ and $L^2(\widehat{\g})$ respectively and for any $x\in \mathcal{I}$,
$ \left \| x\right\|_{L^2(\g)}=\left \| \lambda(_x\varphi) \right \|_{L^2(\widehat{\g})}$ by definition of the dual Haar weight $\widehat{\varphi}$.

For an element $\xi$ of an Hilbert space $H$, we will often use Bra-ket notation $\langle \xi|\in B(H,\Comp)$ and $|\xi\rangle\in B(\Comp,H)$ defined by
\[\langle \xi|:\eta\mapsto \langle \eta,\xi\rangle_H~\mathrm{for~all~}\eta\in H\mathrm{~and~}|\xi\rangle: z\mapsto z\xi~\mathrm{for~all~}z\in \Comp.\]
In particular, $\langle \xi|\eta\rangle=\langle \eta,\xi\rangle_H$ for all $\xi,\eta\in H$. Also, we denote by $\Sigma:H\otimes H\rightarrow H\otimes H$ the swap operator $\xi_1\otimes \xi_2\mapsto \xi_2\otimes \xi_1$.

\subsection{Completely bounded representations and corepresentation operators}

Let $H$ be a fixed Hilbert space.  Recall that there is a bijective correspondence between completely bounded representations $\pi:L^1(\g) \to B(H)$ and operators $ V \in L^\infty(\g) \overline{\otimes} B(H) \subseteq B(L^2(\g) \otimes H)$ satisfying the identity $(\Delta \otimes \mathrm{id})V = V_{13}V_{23}$.  Such operators $V$ are called {\it corepresentations}.   The association $\pi \longleftrightarrow V$ is given by 
\[
\pi(w) = (w \otimes \mathrm{id})V \qquad (w \in L^1(\g)),
\] 
and we have $\|\pi\|_{cb} = \|V\|$.  
We call a representation $\pi: L^1(\g) \to B(H)$ a $\ast$-representation if its restriction to the $\ast$-subalgebra $L^1_\sharp(\g)$ is involutive in the usual sense.  In this case, $\pi$ is automatically completely contractive.  There is a bijective correpondence between non-degenerate $\ast$-representations of $L^1(\g)$ and unitary corepresentations.  Moreover, any unitary corepresentation $V$ actually belongs to the multiplier algebra $M(C_0(\g) \otimes  K(H)) \subseteq L^\infty(\g) \overline{\otimes} B(H)$.   Two representations $\pi:L^1(\g) \to B(H_\pi)$ and $\sigma:L^1(\g) \to B(H_\sigma)$ are called {\it similar} (or equivalent) if there exists an invertible $T \in B(H_\pi, H_\sigma)$ such that $\sigma = T \circ \pi (\cdot) \circ T^{-1}$.  At the level of corepresentations, this is equivalent to saying that $V_\sigma = (\mathrm{id} \otimes T) V_{\pi}(\mathrm{id} \otimes T^{-1})$.  We say that a unitary corepresentation $V \in M(C_0(\g) \otimes  K(H))$ is {\it irreducible} if $\{T \in B(H): T\pi(\cdot) = \pi(\cdot) T\} = \{T \in B(H): (1 \otimes T)V = V(1 \otimes T)\} = \mathbb  C 1$.

Keeping in line with what is now standard terminology, we will often refer to unitary corepresentations $V \in M(C_0(\g) \otimes K(H))$ (and also the corresponding $\ast$-representations $\pi:L^1(\g) \to B(H)$) as {\it unitary representations} of $\g$.

\begin{remark} \label{non-cb}
As mentioned in the introduction, not every bounded representation $\pi:L^1(\g) \to B(H)$ is automatically completely bounded \cite{ChSa13, BrDaSa13}.  For such representations, there does not exist a corresponding (bounded) corepresentation operator $V \in L^\infty(\g) \overline{\otimes}B(H)$
\end{remark}

\subsection{Amenability and co-amenability}  \label{amen-coamen}

We recall here the basic terminology and facts on (co-)amenability for quantum groups. 

\begin{definition}
\begin{enumerate}
\item A locally compact quantum group $\g$ is called amenable if there exists a state $m\in L^{\infty}(\g)^*$ such that 
\[m(w\otimes \mathrm{id})\Delta=w(1)m~\mathrm{for~all~}w\in L^1(\g).\]
We call such a state $m$ a left-invariant mean on $L^{\infty}(\g)$.
\item A locally compact quantum group $\g$ is called co-amenable if there exists a state $\epsilon:C_0(\g)\rightarrow \Comp$ such that
\[(\mathrm{id}\otimes \epsilon)\Delta=\mathrm{id}_{C_0(\g)}.\]
Such a state $\epsilon$ is called a co-unit for $C_0(\g)$. Equivalently, co-amenability is defined as the existence of a net $(\xi_j)_j\subseteq L^2(\g)$ of unit vectors such that
\[\lim_j \left \|W(\xi_j\otimes \xi)-\xi_j\otimes \xi \right \|_{L^2(\g)\otimes L^2(\g)}\rightarrow 0~\mathrm{for~each~}\xi\in L^2(\g),\]
where $W\in B(L^2(\g)\otimes L^2(\g))$ is the multiplicative unitary.
\end{enumerate}
\end{definition}

\begin{remark}
It is well-known that co-amenability of $\widehat{\g}$ implies amenability of $\g$ \cite{BeTu03}. For a net $(\xi_j)_j \subseteq L^2(\g)$ such that $\displaystyle \lim_j \left \|\widehat{W}(\xi_j\otimes \xi)-\xi_j\otimes \xi\right\|_{L^2(\g)\otimes L^2(\g)}=0$ for each $\xi\in L^2(\g)$, we may assume that the net $(\widehat{w}_{\xi_j,\xi_j})_j$ converges to $\displaystyle \lim_j \widehat{w}_{\xi_j,\xi_j}\in B(L^2(\g))^*$  with respect to the weak $*$-topology (thanks to the Alaoglu's theorem). Then one has
\[\lim_j \widehat{w}_{\xi_j,\xi_j}\Bigr |_{C_0(\widehat{\g})}=\widehat{\epsilon}~\mathrm{and}~\lim_j \widehat{w}_{\xi_j,\xi_j}\Bigr|_{L^{\infty}(\g)}=m,\]
where $\widehat{\epsilon}$ is the co-unit for $C_0(\widehat{\g})$ and $m$ is a left invariant mean on $L^{\infty}(\g)$.

Indeed, for any $w\in L^1(\g)$ and $x\in L^{\infty}(\g)$ we have
\begin{align*}
\lim_j \widehat{w}_{\xi_j,\xi_j}((w\otimes \mathrm{id})(\Delta(x)))&=\lim_j (w\otimes \widehat{w}_{\xi_j,\xi_j})(W^*(1\otimes x)W)\\
&=\lim_j (\widehat{w}_{\xi_j,\xi_j}\otimes w)(\widehat{W}(x\otimes 1)\widehat{W}^*)\\
&=\lim_j \widehat{w}_{\xi_j,\xi_j}(x)w(1).
\end{align*}
\end{remark}

\subsection{Crossed products 
 as locally compact quantum groups}

In this subsection we briefly recall how the von Neumann algebraic crossed product $L^{\infty}(N)\rtimes_{\alpha} H$ is understood as a locally compact quantum group, where $\alpha:H\rightarrow \mathrm{Aut}(N)$ is a continuous group homomorphism. Given any such $\alpha$, there always exists a group homomorphism $t:H\rightarrow (0,\infty)$ such that
\[\int_N f(x)dx=\int_N f(\alpha_h(x))t(h)dx\]
for all $h\in H$ and $f\in L^1(N)$.

Given an action $\alpha$, we also denote by $\alpha:L^{\infty}(N)\rightarrow L^{\infty}(H\times N)$ the corresponding $\ast$-homomorphism defined by $(\alpha(g))(h,n)=g(\alpha_h(n))$ for all $h\in H$ and $n\in N$. On the von Neumann algebraic crossed product
\[L^{\infty}(\g)=L^{\infty}(N)\rtimes_{\alpha}H =(\alpha( L^{\infty}(N))\cup (VN(H)\otimes 1))'',\]
there exists a natural multiplicative unitary $W \in B(L^2(H \times N))$, making  $L^{\infty}(N)\rtimes_{\alpha} H$ into a locally compact quantum group, which is given by
\[(W^*(f))(h_1,n_1,h_2,n_2)=f(h_2^{-1}h_1,n_1,h_2,\alpha_{h_2^{-1}h_1}(n_1)n_2) \qquad h_1,h_2\in H, \ n_1,n_2\in N.\]
Then $\Delta(x)=W^*(1\otimes x)W$ gives the comultiplication and $\g=(L^{\infty}(\g),\Delta)$ turns out to be of Kac type (see [Corollary 3.6.17, \cite{Va01}]). In this case, $\tau:L^{\infty}(H\times N)\rightarrow L^{\infty}(H\times N)$, given by
\[(\tau(f))(h,n)=f(h,\alpha_h(n)),\]
is a $*$-automorphism, but generally it is not an isometry on $L^2(H\times N)$. However, using the function $h\mapsto t(h)$, we are able to get an isometry $\tau_2:L^2(H\times N)\rightarrow L^2(H\times N)$ defined by
\[(\tau_2(f))(h,n)=f(h,\alpha_h(n))t(h)^{\frac{1}{2}}.\]

On the dual side, the underlying von Neumann algebra is
\[L^{\infty}(\widehat{\g})=L^{\infty}(H)\overline{\otimes} VN(N)\]
and the corresponding multiplicative unitary is given by $\widehat{W}=\Sigma W^*\Sigma $.

Thanks to \cite{Ng02} and \cite{DQV02}, we have the following characterization of amenability for the dual of the crossed product $\g=(L^{\infty}(N)\rtimes_{\alpha}H,\Delta)$:
\[\begin{array}{lll} \widehat{\g}~\mathrm{amenable}&\iff H\mathrm{~amenable}&\iff \g~\mathrm{co-amenable}. \end{array}\]
Moreover, we can give explicit descriptions of a net satisfying

\[ \lim_k \left \| W^*(\xi_k\otimes \xi)-\xi_k\otimes \xi \right \|_{L^2(H\times N\times H\times N)}=0 ~\forall \xi\in L^2(H\times N) .\]

\begin{proposition}\label{prop:bai}

Let $N$ be discrete, $H$ be amenable and choose a net $(f_i)_i\subseteq L^2(H)$ such that 
\[\lim_i \int_H \left |f_i(h_0h)-f_i(h) \right |^2 dh =0\]
uniformly for $h_0$ on compact subsets of $H$. Then 
\[\lim_{i}\left \|W^*(f_i\otimes \chi_{e_N}\otimes \xi)-f_i\otimes \chi_{e_N}\otimes \xi \right \|_{L^2(H\times N\times H\times N)}=0\] 
for each $\xi\in L^2(H\times N)$.

\end{proposition}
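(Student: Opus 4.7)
The plan is to unwind $W^*$ explicitly on the specific tensor $F = f_i \otimes \chi_{e_N} \otimes \xi$ and reduce the statement to the given Følner condition on $(f_i)_i$. Applying the formula
\[(W^*f)(h_1,n_1,h_2,n_2) = f(h_2^{-1}h_1,\, n_1,\, h_2,\, \alpha_{h_2^{-1}h_1}(n_1)n_2)\]
to $F$, the factor $\chi_{e_N}(n_1)$ forces $n_1 = e_N$, and since each $\alpha_h$ is a group automorphism of $N$ it fixes the identity $e_N$. Hence the last slot simplifies to $n_2$, and I obtain the clean identity
\[(W^*F)(h_1,n_1,h_2,n_2) = f_i(h_2^{-1}h_1)\,\chi_{e_N}(n_1)\,\xi(h_2,n_2).\]

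Subtracting $F$ pointwise and using Fubini together with the fact that $N$ is discrete (so the sum over $n_1$ collapses to the single term $n_1 = e_N$), I would factor the squared norm as
\[\|W^*F - F\|^2 \;=\; \int_H g(h_2)\,\Phi_i(h_2^{-1})\, dh_2,\]
where $g(h_2) := \|\xi(h_2,\cdot)\|_{\ell^2(N)}^2$ is an $L^1(H)$ function with $\|g\|_1 = \|\xi\|_{L^2(H \times N)}^2$, and $\Phi_i(h_0) := \int_H |f_i(h_0 h) - f_i(h)|^2\, dh$ is precisely the quantity assumed to vanish as $i \to \infty$ uniformly for $h_0$ in compact subsets of $H$.

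From this reduction the conclusion follows by a routine exchange-of-limits argument. Given $\eps > 0$, I would pick a compact $K \subseteq H$ with $\int_{H \setminus K^{-1}} g\, dh_2 < \eps$, split the outer integral at $K^{-1}$, bound the piece over $K^{-1}$ by $\|g\|_1 \cdot \sup_{h_0 \in K} \Phi_i(h_0^{})$ which tends to $0$ by the Følner hypothesis, and bound the tail over $H \setminus K^{-1}$ using the trivial estimate $\Phi_i \le 4\,\|f_i\|_{L^2(H)}^2$.

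The main obstacle is the tail term: controlling $\int_{H \setminus K^{-1}} g(h_2)\Phi_i(h_2^{-1})\,dh_2$ requires a uniform bound $\sup_i \|f_i\|_{L^2(H)} \le M < \infty$. This is the standard normalization built into a Følner net on an amenable group, and may be assumed without loss of generality since replacing $f_i$ by $f_i/\|f_i\|_{L^2(H)}$ preserves the stated Følner hypothesis. Apart from imposing this normalization, the proof is essentially the computation above combined with the explicit formula for $W^*$.
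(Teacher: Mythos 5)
Your computation is correct and is essentially the paper's own argument: the paper likewise unwinds $W^*$ on $f_i\otimes\chi_{e_N}\otimes\xi$, uses $\chi_{e_N}(n_1)$ and $\alpha_h(e_N)=e_N$ to collapse the last slot to $\xi(h_2,n_2)$, and reduces the norm to $\int_{H\times N}|\xi(h_2,n_2)|^2\int_H|f_i(h_2^{-1}h_1)-f_i(h_1)|^2\,dh_1\,dh_2\,dn_2$, which vanishes by the uniform-on-compacts hypothesis. The only divergence is cosmetic: the paper first reduces to $\xi\in C_c(H\times N)$ by density, whereas you keep general $\xi\in L^2$ and split off the tail; both routes need $\sup_i\|f_i\|_{L^2(H)}<\infty$ (the paper's density reduction tacitly uses it too, and it holds in the intended application since the $f_i\otimes\chi_{e_N}$ are unit vectors). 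Be aware, though, that your ``WLOG normalize'' remark is not quite a proof of that bound: rescaling by $\|f_i\|^{-1}$ need not preserve the hypothesis if $\|f_i\|\to 0$, and if $\|f_i\|\to\infty$ the conclusion for the rescaled net does not return the conclusion for the original one — so the boundedness should simply be stated as a standing (and contextually automatic) assumption rather than argued away.
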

\begin{proof}
We may assume that $\xi\in C_c(H\times N)$. Since $\displaystyle \int_{H}\left |f_i(h_2^{-1}h_1)-f_i(h_1)\right |^2 dh_1$ converges to $0$ uniformly for $h_2$ on $supp(\xi)$,
\begin{align*}
&\lim_{i} \left \|W^*(f_i\otimes \chi_{e_N}\otimes \xi)-f_i\otimes \chi_{e_N}\otimes \xi \right\|_{L^2(H\times N\times H\times N)}^2\\
&=\lim_i \int_{ H\times H\times N}\left |f_i(h_2^{-1}h_1)\xi(h_2,n_2)-f_i(h_1)\xi(h_2,n_2)\right |^2  dh_1dh_2dn_2\\
&=\lim_i \int_{H\times N}\left|\xi(h_2,n_2)\right |^2 \int_H \left |f_i(h_2^{-1}h_1)-f_i(h_1)\right |^2  dh_1dh_2dn_2=0.
\end{align*}

\end{proof}

\subsection{Completely bounded similarity degree}\label{pre:degree}

In \cite{Pi98}, G. Pisier analyzed the notion of ``similarity degree'' for completely bounded representations of completely contractive Banach algebras in relation to the Kadison similarity problem and Dixmier's problem for discrete groups.  In Pisier's original work, there were certain assumptions made on the existence of units in the algebras under consideration, and later N. Spronk verified that Pisier's techniques work in general \cite{Sp02}. Let us collect some results of \cite{Pi98} and \cite{Sp02} that are necessary for our work.

\begin{definition}
Let $\A$ be a completely contractive Banach algebra and suppose that $\A$ admits at least one injective completely contractive representation $\lambda:\A\rightarrow B(H_{\lambda})$.
\begin{enumerate}
\item We say that $\A$ has the completely bounded similarity property if every completely bounded homomorphism $\pi:\A\rightarrow B(H_{\pi})$ admits an invertible $T\in B(H_{\pi})$ such that $T\circ \pi(\cdot)\circ T^{-1}$ is completely contractive.

\item Suppose that a completely contractive Banach algebra $\A$ has the completely bounded similarity property. The completely bounded similarity degree $d_{cb}(\A)$ is defined as the infimum of $\alpha\in (0,\infty)$ satisfying that every completely bounded homomorphism $\pi:\A\rightarrow B(H_{\pi})$ admits an invertible $T\in B(H_{\pi})$ such that
\begin{align*}
(a)&T\circ \pi(\cdot)\circ T^{-1}~\mathrm{is~completely~contractive}\\
(b)&\left \|T\right\|\left \|T^{-1}\right\|\leq K\left \|\pi\right\|_{cb}^{\alpha}~\mathrm{for~some~universal~constant~}K>0.
\end{align*}
\end{enumerate}
\end{definition}

\begin{remark}
For every completely contractive Banach algebra with the completely bounded similarity property, the existence of such $\alpha\in [1,\infty)$ is known. Moreover, the completely bounded similarity degree $d_{cb}(\A)$ is always a natural number.
\end{remark}

Let us now take $\A=L^1(\g)$ with the convolution product $\star$ and let $c$ be the smallest cardinality of a dense subset of $L^1(\g)$. For each $a\geq 1$, define 
$\mathrm{Hom}_{a}$ as the set of all non-degenerate homomorphisms $\pi:L^1(\g)\rightarrow B(H_{\pi})$ with $\left \|\pi \right\|_{cb}\leq a$ and $dim(H_{\pi})\leq c$.

We equip $L^1(\g)$ with the norm structure
\[\left \|x\right\|_a=\sup_{\pi\in \mathrm{Hom}_a}\left \|\pi(x)\right\|_{B(H_{\pi})}\]
for all $x\in L^1(\g)$ and define $\displaystyle \widetilde{L^1(\g)}_a$ as the completion of $L^1(\g)$ with respect to the norm $\left \|\cdot \right \|_a$. From now on we consider $\widetilde{L^1(\g)}_a$ as a subalgebra of $\displaystyle \bigoplus_{\pi\in \mathrm{Hom}_a}B(H_{\pi})$ in the obvious way, and equip it with the natural operator subspace structure coming from this inclusion.

We denote by $\iota_{a}:L^1(\g)\hookrightarrow \widetilde{L^1(\g)}_a$ the natural embeddings and define multiplication maps
\[m_{N,a}:L^1(\g)^{N\otimes_h}\rightarrow \widetilde{L^1(\g)}_a,~x_1\otimes \cdots \otimes x_N\mapsto \iota_a(x_1\star \cdots \star x_N)\]
and
\[m_{N}:L^1(\g)^{N\otimes_h}\rightarrow C_0(\widehat{\g}),~x_1\otimes \cdots \otimes x_N\mapsto \lambda(x_1\star \cdots \star x_N),\]
where $\otimes_h$ denotes the Haagerup tensor product.  These  maps are completely bounded with $\left \|m_{N,a}\right \|_{cb}\leq a^N$ and $\left \|m_N\right \|_{cb}\leq 1$.

Let us suppose that $\widehat{\g}$ is co-amenable and $\A=L^1(\g)$ has the Day-Dixmier property with $d_{cb}(L^1(\g))\leq \gamma$. Then $\widetilde{\A}_1=C_0^u(\widehat{\g})=C_0(\widehat{\g})$ since every completely contractive representation is a $\ast$-representation and $\iota_a:L^1(\g)\hookrightarrow \widetilde{L^1(\g)}_a$ extends to a completely bounded map $j_a:C_0(\widehat{\g})\rightarrow \widetilde{L^1(\g)}_a$, $\lambda(f)\mapsto \iota_a(f)$, with $\left \|j_a\right\|_{cb}\leq Ka^{\gamma}$.

\begin{proposition}\label{prop1}

Suppose that $\widehat{\g}$ is co-amenable, $\g$ has the Day-Dixmier property and $m_N:L^1(\g)^{N\otimes_h}\rightarrow C_0(\widehat{\g})$ is a complete surjection, i.e. there exists $K>0$ such that 
\[\mathrm{Ball}(M_n(C_0(\widehat{\g})))\subseteq K (id_n\otimes m_{N})(\mathrm{Ball}(M_n(L^1(\g)^{N\otimes_h})))\mathrm{~for~all~}n\in \n.\] 
Then, for any completely bounded representation $\pi:L^1(\g)\rightarrow B(H_{\pi})$, there exists an invertible $T\in B(H_{\pi})$ such that 
\begin{align*}
\mathrm{(a)}&T\circ \pi(\cdot)\circ T^{-1}~\mathrm{is~a~}*\mathrm{-representation}\\
\mathrm{(b)}&\left \| T\right\|\left \|T^{-1}\right\|\leq K\left \|\pi\right\|_{cb}^N.
\end{align*}

\end{proposition}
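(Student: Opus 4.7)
The plan is to use the complete surjectivity of $m_N$ to push the quantitative Day-Dixmier data into a bound $\|j_a\|_{cb}\le Ka^N$ with $a=\|\pi\|_{cb}$, and then to conclude by Haagerup's theorem on completely bounded homomorphisms of $C^\ast$-algebras. The starting identity is
\[m_{N,a}=j_a\circ m_N\]
as completely bounded maps $L^1(\g)^{N\otimes_h}\to \widetilde{L^1(\g)}_a$, which on elementary tensors reduces to
\[(j_a\circ m_N)(x_1\otimes\cdots\otimes x_N)=j_a(\lambda(x_1\star\cdots\star x_N))=\iota_a(x_1\star\cdots\star x_N)=m_{N,a}(x_1\otimes\cdots\otimes x_N),\]
and extends to all of $L^1(\g)^{N\otimes_h}$ by completely bounded continuity together with the density of $\lambda(L^1(\g))$ in $C_0(\widehat{\g})$ (coamenability of $\widehat{\g}$).

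For each $n\in\n$ and each $y\in M_n(C_0(\widehat{\g}))$ with $\|y\|\le 1$, complete surjectivity furnishes a lift $\mathbf{x}$ in the $K$-ball of $M_n(L^1(\g)^{N\otimes_h})$ with $(\mathrm{id}_n\otimes m_N)(\mathbf{x})=y$, whence
\[(\mathrm{id}_n\otimes j_a)(y)=(\mathrm{id}_n\otimes m_{N,a})(\mathbf{x})\]
has norm at most $Ka^N$, using $\|m_{N,a}\|_{cb}\le a^N$; this yields $\|j_a\|_{cb}\le Ka^N$. Given the cb representation $\pi:L^1(\g)\to B(H_\pi)$, after a standard cyclic-decomposition reduction to the case $\pi\in\mathrm{Hom}_a$, the representation $\pi$ extends completely contractively to $\tilde{\pi}:\widetilde{L^1(\g)}_a\to B(H_\pi)$ with $\tilde{\pi}\circ\iota_a=\pi$. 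The composition $\Phi:=\tilde{\pi}\circ j_a:C_0(\widehat{\g})\to B(H_\pi)$ is then a completely bounded homomorphism (the homomorphism property follows from $j_a\circ\lambda=\iota_a$ being multiplicative together with density of $\lambda(L^1(\g))$) satisfying $\Phi\circ\lambda=\pi$ and $\|\Phi\|_{cb}\le\|j_a\|_{cb}\le Ka^N$.

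Haagerup's similarity theorem for completely bounded homomorphisms of $C^\ast$-algebras now produces an invertible $T\in B(H_\pi)$ with $\|T\|\|T^{-1}\|\le\|\Phi\|_{cb}\le K\|\pi\|_{cb}^N$ such that $T\Phi T^{-1}$ is a $\ast$-representation of $C_0(\widehat{\g})$. Precomposing with the $\ast$-homomorphism $\lambda$ on $L^1_\sharp(\g)$ then shows that $T\pi T^{-1}=(T\Phi T^{-1})\circ\lambda$ is a $\ast$-representation of $L^1(\g)$, verifying (a), while (b) has been recorded along the way. The main technical obstacle will be the rigorous passage from algebraic tensors to the Haagerup completion, i.e., verifying $m_{N,a}=j_a\circ m_N$ globally, establishing existence of $\tilde{\pi}$ beyond the cardinality restriction in $\mathrm{Hom}_a$ by a cyclic-decomposition argument, and confirming that $\Phi$ extends to a genuine $C^\ast$-algebra homomorphism; each of these is standard in the Pisier-Spronk framework but requires some care.
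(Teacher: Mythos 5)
Your argument is correct and follows essentially the same route as the paper: factor $m_{N,a}=j_a\circ m_N$, use the complete surjectivity of $m_N$ to get $\|j_a\|_{cb}\leq K\|\pi\|_{cb}^N$, extend $\pi$ to the completely contractive $\widetilde{\pi}$ on $\widetilde{L^1(\g)}_a$, and apply a similarity theorem to $\widetilde{\pi}\circ j_a$ on $C_0(\widehat{\g})$ before precomposing with $\lambda$. The only cosmetic difference is that the paper obtains the final step from Paulsen's similarity theorem for operator algebras together with the automatic $\ast$-property of contractive homomorphisms of $C^\ast$-algebras, whereas you cite Haagerup's theorem directly, which yields the same bound $\|T\|\|T^{-1}\|\leq\|\widetilde{\pi}\circ j_a\|_{cb}$.
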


\begin{proof}
Since $j_a\circ m_N=m_{N,a}:L^1(\g)^{N\otimes_h}\rightarrow \widetilde{L^1(\g)}_a$ ,
\[\left \| j_a \right\|_{cb}\leq K \left \| j_a\circ m_{N} \right\|_{cb}=K \left \|m_{N,a} \right\|_{cb}\leq Ka^N.\]

Now, for any completely bounded representation $\pi:L^1(\g)\rightarrow B(H_{\pi})$, the extension $\widetilde{\pi}:\widetilde{L^1(\g)}_a\rightarrow B(H_{\pi})$ exists as a completely contractive homomorphism for $a=\left \|\pi \right\|_{cb}$. Then $\widetilde{\pi}\circ j_a:C_0(\widehat{\g})\rightarrow B(H_{\pi})$ has completely bounded norm less than $Ka^N$. Since $C_0(\widehat{\g})$ is of course an operator algebra, there exists an invertible $T\in B(H_{\pi})$ such that 
\begin{align*}
\mathrm{(a)}'&T\circ [\widetilde{\pi}\circ j_a(\cdot)]\circ T^{-1}~\mathrm{is~complete~contractive~and}\\
\mathrm{(b)}'&\left \| T\right\|\left \|T^{-1}\right\|\leq Ka^N.
\end{align*}

Moreover, $T\circ [\widetilde{\pi}\circ j_a(\cdot)]\circ T^{-1}$ is a $*$-homomorphism since every contractive homomorphism on a $C^*$-algebra is automatically a $*$-homomorphism \cite{Pa84}. Finally we have that
\begin{align*}
\mathrm{(a)}&T\circ \pi(\cdot)\circ T^{-1}=(T\circ [\widetilde{\pi}\circ j_a(\cdot)]\circ T^{-1})\circ \lambda~\mathrm{is~a~}*\mathrm{-representation~and}\\
\mathrm{(b)}&\left \|T\right\| \left \|T^{-1}\right\| \leq K\left \|\pi \right\|_{cb}^N.
\end{align*}

\end{proof}

\section{Compact quantum groups without the Day-Dixmier property} \label{section:nonKac}

In this section, we will establish that the Day-Dixmier property does not generally hold within the category of compact quantum groups. In other words, the role of associated anti-representation $\check{\pi}$ highlighted in \cite{BrDaSa13} is indispensable. We begin by recalling some facts about unitary representations of compact quantum groups.

Let $\g$ be a compact quantum group, and denote by $\text{Irr}(\g)$ the collection of equivalence classes of irreducible unitary representations of $\g$ under the relation of unitary equivalence. For each $\alpha \in \text{Irr}(\g)$ we fix a representative $u^\alpha \in M(C_0(\g) \otimes K(H_\alpha)) = C_0(\g) \otimes B(H_\alpha)$.  We write $n_\alpha = \dim H_\alpha < \infty$ for the {\it dimension of $\alpha$}.  By fixing an orthonormal basis $(e_j)_{1 \le j \le n_\alpha} \subset H_\alpha$, we can then write $u^\alpha = [u^\alpha_{ij}] \in M_{n_\alpha}(C_0(\g))$.  For each $\alpha \in \text{Irr}(\g)$, there exists a positive invertible $Q_\alpha \in B(H_\alpha)$ with the properties that  $\text{Tr}(Q_\alpha) = \text{Tr}(Q_\alpha^{-1})$ and $Q_\alpha^{\frac{1}{2}} \overline{u^\alpha} Q_\alpha^{-\frac{1}{2}}$ is a unitary irreducible representation of $\g$ \cite{Ti08}.  The quantity $d_\alpha = \text{Tr}(Q_\alpha) = \text{Tr}(Q_\alpha^{-1})$ is called the {\it quantum dimension of $\alpha$}.
 
Now let us suppose that $\g$ is a compact quantum group with the Day-Dixmier property, i.e. (a) every completely contractive representation of $L^1(\g)$ is a $*$-representation and (b) every completely bounded representation of $L^1(\g)$ is similar to a $*$-representation (which is automatically completely contractive \cite{BrDaSa13}). Then due to \cite [Theorem 4.2.8]{Sp02} and \cite[Corollary 2.4]{Pi98}, there exists $K,\gamma >0$ with the property that every completely bounded representation $\pi:L^1(\g)\rightarrow B(H_{\pi})$ admits an invertible $T\in B(H_{\pi})$ such that $T\circ \pi(\cdot )\circ T^{-1}$ is a $*$-representation and $\left \|T\right \| \left \|T^{-1}\right \|\leq K \left \|\pi\right\|_{cb}^{\gamma}$.

Let $\overline{u^{\alpha}}=(u_{i,j}^*)_{1\leq i,j\leq n_{\alpha}}\in L^{\infty}(\g)\overline{\otimes}M_{n_{\alpha}}$ be associated with a completely bounded representation $\pi_{\alpha}:L^1(\g)\rightarrow M_{n_{\alpha}}$ satisfying 
\[\left \|\pi_{\alpha} \right\|_{cb}=\left \|\overline{u_{\alpha}}\right \|_{L^{\infty}(\g)\overline{\otimes }M_{n_{\alpha}}}\leq n_{\alpha}^2.\]

Then there exists $T_{\alpha}\in M_{n_{\alpha}}$ such that $\left \|T_{\alpha}\right \| \left \|T_{\alpha}^{-1}\right\|\leq K\cdot n_{\alpha}^{2\gamma}$ and $T_{\alpha} \overline{u_{\alpha}} T_{\alpha}^{-1}$ is a unitary irreducible corepresentation and we know that $Q_{\alpha}^{\frac{1}{2}}\overline{u_{\alpha}}Q_{\alpha}^{-\frac{1}{2}}$ is also a unitary irreducible corepresentation. By Schur's lemma and the assumption of irreducibility, for each $\alpha\in \mathrm{Irr}(\g)$, there exists a unitary $U_{\alpha}\in M_{n_{\alpha}}$ and a constant $c_{\alpha}\in \Comp$ such that
\[U_{\alpha}T_{\alpha}=c_{\alpha}\cdot Q_{\alpha}^{\frac{1}{2}}.\]

We denote by $\lambda_{min}^{\alpha}$ and $\lambda_{max}^{\alpha}$ the smallest and largest eigenvalues of $Q_{\alpha}$ respectively for each $\alpha\in \mathrm{Irr}(\g)$. Then we have
\[\sqrt{\frac{\lambda^{\alpha}_{max}}{\lambda^{\alpha}_{min}}}=\left \|T_{\alpha} \right \| \left \|T_{\alpha}^{-1}\right \|\leq Kn_{\alpha}^{2\gamma}~\mathrm{for~all~}\alpha\in \mathrm{Irr}(\g).\]

This implies that 
\[d_{\alpha}\leq n_{\alpha}\lambda^{\alpha}_{max}\leq n_{\alpha}\frac{\lambda^{\alpha}_{max}}{\lambda^{\alpha}_{min}}\leq K^2n_{\alpha}^{4\gamma+1}~\mathrm{for~all~}\alpha\in \mathrm{Irr}(\g).\]

\begin{theorem} \label{thm:subexponential}
Let $\g$ be a compact quantum group satisfying the Day-Dixmier property and suppose that the function $\alpha\mapsto n_{\alpha}$ has subexponential growth, i.e.
\[\limsup_{n\rightarrow \infty}~ \Big(\sum_{\alpha\in \mathrm{Irr}(\g):u^{\alpha}\leq v^{n\otimes }}n_{\alpha}^2\Big)^{\frac{1}{n}}=1\]
for any finite dimensional representation $v$. 
Then $\g$ is of Kac type.
\end{theorem}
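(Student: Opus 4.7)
The plan is to combine the polynomial estimate $d_\alpha \le K^2 n_\alpha^{4\gamma+1}$ (already obtained in the paragraph preceding the theorem) with the multiplicativity of the quantum dimension under tensor products and self-amplify through tensor powers. The goal is to prove that $d_\alpha = n_\alpha$ for every $\alpha \in \mathrm{Irr}(\g)$, which is equivalent to $Q_\alpha = I_{H_\alpha}$, and hence to $\g$ being of Kac type. The two standard facts I will use throughout are: $d_\alpha \ge n_\alpha$ (by Cauchy-Schwarz via $n_\alpha^2 \le \mathrm{Tr}(Q_\alpha)\mathrm{Tr}(Q_\alpha^{-1}) = d_\alpha^2$), with equality if and only if $Q_\alpha = I$; and the multiplicativity $d_{\alpha \otimes \beta} = d_\alpha d_\beta$, $n_{\alpha \otimes \beta} = n_\alpha n_\beta$.

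Fix $\alpha \in \mathrm{Irr}(\g)$ and decompose $\alpha^{\otimes n} \cong \bigoplus_\beta N_\beta^{(n)} \beta$ into irreducibles. I then compare two expressions. On the one hand, $d_\alpha^n = d_{\alpha^{\otimes n}} = \sum_\beta N_\beta^{(n)} d_\beta$; on the other hand, $\sum_\beta N_\beta^{(n)} n_\beta = n_\alpha^n$. Substituting $d_\beta \le K^2 n_\beta^{4\gamma+1}$ and pulling $n_\beta^{4\gamma}$ out as $\max_\beta n_\beta^{4\gamma}$, I obtain an inequality of the shape
\[
d_\alpha^n \;\le\; K^2\, n_\alpha^n \,\bigl(\max_\beta n_\beta\bigr)^{4\gamma},
\]
where the maximum runs over $\beta \in \mathrm{Irr}(\g)$ with $u^\beta \le u^{\alpha,n\otimes}$.

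The subexponential growth hypothesis, applied to the finite-dimensional representation $v = u^\alpha$, then controls the remaining maximum, since $\max_\beta n_\beta \le (\sum_\beta n_\beta^2)^{1/2}$ grows subexponentially in $n$: for any $\epsilon > 0$ this maximum is eventually bounded by $C_\epsilon (1+\epsilon)^n$. Plugging this into the displayed inequality, taking $n$-th roots, and then letting $n \to \infty$ followed by $\epsilon \to 0$, the ratio $d_\alpha / n_\alpha$ is forced to be at most $1$. Combined with $d_\alpha \ge n_\alpha$, this yields $d_\alpha = n_\alpha$, so $Q_\alpha = I_{H_\alpha}$, and the theorem follows.

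I do not anticipate any serious obstacle, as the argument is essentially a tensor-power self-amplification of the polynomial bound. The one place where the hypothesis is used essentially is in the final step: without the subexponential control on $\max_\beta n_\beta$ for $\beta \subseteq \alpha^{\otimes n}$, the amplification would not improve the polynomial bound beyond itself and could not force $d_\alpha/n_\alpha \le 1$.
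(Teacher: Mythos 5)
Your argument is correct, but it follows a genuinely different route from the paper. The paper's proof is a short reduction: using the bound $d_\alpha\le K^2 n_\alpha^{4\gamma+1}$ from the preceding paragraph, it estimates $\sum_{\beta\le v^{n\otimes}} d_\beta^2\le K^4\big(\sum_{\beta\le v^{n\otimes}} n_\beta^2\big)^{4\gamma+1}$, concludes that the quantum dimension function $\alpha\mapsto d_\alpha$ also has subexponential growth, and then invokes an external result ([DPR16, Corollary 4.5]) asserting that subexponential growth of the quantum dimension forces Kac type. You instead bypass that citation entirely: starting from the same polynomial bound, you use additivity and multiplicativity of $d$ and $n$ on the decomposition $\alpha^{\otimes n}\cong\bigoplus_\beta N_\beta^{(n)}\beta$ to get $d_\alpha^n\le K^2 n_\alpha^n(\max_\beta n_\beta)^{4\gamma}$, control $\max_\beta n_\beta\le\big(\sum_\beta n_\beta^2\big)^{1/2}$ by the subexponential hypothesis applied to $v=u^\alpha$, and take $n$-th roots to force $d_\alpha\le n_\alpha$; combined with the general inequality $d_\alpha\ge n_\alpha$ (with equality iff $Q_\alpha=I_{H_\alpha}$, since $\mathrm{Tr}(Q_\alpha)=\mathrm{Tr}(Q_\alpha^{-1})$) this gives $Q_\alpha=I_{H_\alpha}$ for every $\alpha$, i.e.\ traciality of the Haar state. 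What each approach buys: the paper's version is shorter given the cited result and in passing records the formally stronger intermediate fact that $d$ itself grows subexponentially, while yours is self-contained modulo standard facts about the quantum dimension (additivity and multiplicativity on finite-dimensional representations, $d_\alpha\ge n_\alpha$, and Kac type $\iff Q_\alpha=I$ for all $\alpha$), in effect reproving the special instance of the [DPR16]-type implication that is needed here; if you write this up, you should cite or briefly justify those standard facts (e.g.\ from Neshveyev--Tuset) since they carry the weight of the amplification step.
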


\begin{proof}

By [Corollary 4.5, \cite{DPR16}], it is sufficient to show that the function $\alpha\mapsto d_{\alpha}$ has the subexponential growth.

For any finite dimensional representation $v$, we have
\begin{align*}
\sum_{\alpha\in \mathrm{Irr}(\g):u^{\alpha}\leq v^{n\otimes }}d_{\alpha}^2&\leq \sum_{\alpha\in \mathrm{Irr}(\g):u^{\alpha}\leq v^{n\otimes }} K^4n_{\alpha}^{8\gamma+2}\\
&\leq K^4(\sum_{\alpha\in \mathrm{Irr}(\g):u^{\alpha}\leq v^{n\otimes }} n_{\alpha}^2 )^{4\gamma+1}.
\end{align*}

Therefore,
\[\limsup_{n\rightarrow \infty}~(\sum_{\alpha\in \mathrm{Irr}(\g):u^{\alpha}\leq v^{n\otimes }}d_{\alpha}^2)^{\frac{1}{n}}\leq  \limsup_{n\rightarrow \infty}~K^{\frac{4}{n}}(\sum_{\alpha\in \mathrm{Irr}(\g):u^{\alpha}\leq v^{n\otimes }}n_{\alpha}^2)^{\frac{4\gamma+1}{n}}=1\]
for any finite dimensional representation $v$. Hence we reach the conclusion.

\end{proof}

\begin{corollary} \label{cor:Gq}
Let $G$ be a simply connected semisimple compact Lie group. Then the Drinfeld-Jimbo $q$-deformations $G_q$ with $0<q<1$ does not have the Day-Dixmier property.
\end{corollary}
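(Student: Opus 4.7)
The plan is to apply Theorem \ref{thm:subexponential} by contraposition: if $G_q$ had the Day-Dixmier property together with subexponential growth of the dimensions $n_\alpha$, then $G_q$ would be of Kac type. Since $G_q$ for $0<q<1$ is a genuine non-Kac deformation -- its Haar state is not tracial, and the quantum dimensions $d_\alpha$ are $q$-analogues of the Weyl dimension formula which generically differ from the classical dimensions -- this will yield the contradiction we want.

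A well-known feature of the Drinfeld-Jimbo construction is that, for $0<q<1$, the tensor category of finite dimensional representations of $G_q$ has the same fusion rules as that of $G$, with irreducibles indexed by dominant integral weights and Hilbert space dimensions $n_\alpha = \dim H_\alpha$ agreeing with their classical counterparts; only the quantum dimensions $d_\alpha$ are affected by the deformation. Consequently the subexponential growth hypothesis $\displaystyle \limsup_{n\to\infty} \Big(\sum_{u^\alpha \leq v^{n\otimes}} n_\alpha^2\Big)^{1/n} = 1$ for $G_q$ reduces to the purely classical statement that $\sum_{u^\alpha \leq v^{n\otimes}} n_\alpha^2$ grows at most polynomially in $n$ for each finite dimensional representation $v$ of $G$.

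To verify this last statement, I would note that the highest weight of any irreducible summand of $v^{n\otimes}$ lies in a translate of the $n$-fold dilate of the convex hull of weights of $v$, so the number of distinct irreducibles appearing is $O(n^r)$ where $r$ is the rank of $G$, while the Weyl dimension formula bounds each such $n_\alpha$ by a polynomial in $n$ of degree at most the number of positive roots of $G$. Combining these bounds, the sum $\sum_{u^\alpha \leq v^{n\otimes}} n_\alpha^2$ is polynomial in $n$, its $n$-th root tends to $1$, and the corollary then follows at once from Theorem \ref{thm:subexponential}. There is no serious obstacle: the substantive analytic content lies entirely in Theorem \ref{thm:subexponential}, and the present deduction amounts to recording the two inputs that classical dimensions transfer unchanged to $G_q$ and that the $q$-deformation for $0<q<1$ destroys the Kac property.
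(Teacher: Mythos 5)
Your proposal is correct and follows essentially the same route as the paper: apply Theorem \ref{thm:subexponential}, using that the classical dimensions $n_\alpha$ of $\mathrm{Irr}(G_q)$ have polynomial (hence subexponential) growth while $G_q$ is not of Kac type for $0<q<1$. The only difference is cosmetic: the paper cites \cite[Theorem 2.4.7]{NeTu13} for the polynomial growth, whereas you supply the standard highest-weight/Weyl dimension formula argument directly.
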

\begin{proof}

For $\g=G_q$ with $0<q<1$, the function $\alpha\mapsto n_{\alpha}$ has polynomial growth \cite[Theorem 2.4.7]{NeTu13}.
\end{proof}

\subsection{An explicit example} Despite the applicability of the above theorem to many concrete examples of compact quantum groups, we find ourselves unable at the present time to construct so many explicit examples of completely bounded representations $\pi:L^1(\g) \to B(H_{\pi})$ that fail to be similar to $\ast$-representations, even for the simplest $q$-deformations, like Woronowicz's $SU_q(2)$ quantum group.  Let us content ourselves for the time being with at least one explicit example, obtained from an infinte tensor product of $SU_q(2)$'s.   

Let $\g=\displaystyle \prod_{n\in \n}SU_{q_n}$ with $q_n\rightarrow 0$ as $n\rightarrow \infty$ and denote by $a_n$ and $c_n$ the standard generators of $SU_{q_n}$ in $\g=\displaystyle \prod_{n\in \n}SU_{q_n}$. Then 
\[V:=\bigoplus_{n\in \n}\left ( \begin{array}{cc}  a_n&-q_nc_n^*\\ c_n&a_n^* \end{array} \right )\in  L^{\infty}(\g)\overline{\otimes}(\ell^{\infty}-\bigoplus_{n\in \n}M_2)\]
is a representation of $\g$, so that its contragredient 
\[\overline{V}=\bigoplus_{n\in \n}\left ( \begin{array}{cc}  a_n^*&-q_nc_n\\ c_n^*&a_n \end{array} \right )\]
also satisfies $(\Delta\otimes \mathrm{id})\overline{V}=\overline{V}_{13}\overline{V}_{23}$. Note that

\begin{align*}
\left \|\overline{V}\right\|_{ L^{\infty}(\g)\overline{\otimes}(\ell^{\infty}-\bigoplus_{n\in \n}M_2)}&=\sup_{n\in \n} \left \| \left ( \begin{array}{cc}  a_n^*&-q_nc_n\\ c_n^*&a_n \end{array} \right ) \right \|_{M_2(L^{\infty}(\g))}\\
&\leq \sup_{n\in \n} (\left \| a_n^*\right \|+\left \|q_nc_n \right \|+\left \|c_n^* \right \|+\left \| a_n\right \|)\\
&\leq 4.
\end{align*}
It can be readily checked that the completely bounded representation associated to $\overline{V}$ is non-degenerate.  Hence by \cite[Theorem 6.1]{BrDaSa13}, the completely bounded representation $\displaystyle \pi:L^1(\g)\rightarrow \ell^{\infty}-\bigoplus_{n\in \n}M_2$ correspondng to $\overline{V}$ will be similar to a $\ast$-representation iff $\check{\pi}$ is completely bounded iff $\overline{V}$ is invertible.  

Now, if we assume that $\overline{V}$ is invertible, then the algebraic inverse
\begin{align*}
&\bigoplus_{n\in \n}\left ( \begin{array}{cc}  q_n&0\\ 0&q_n^{-1} \end{array} \right ) \left ( \begin{array}{cc}  a_n&c_n\\ -q_nc_n^*&a_n^* \end{array} \right ) \left ( \begin{array}{cc}  q_n^{-1}&0\\ 0&q_n \end{array} \right )\\
&=\bigoplus_{n\in \n}\left ( \begin{array}{cc}  a_n&q_n^2 c_n\\ -q_n^{-1}c_n^*&a_n^* \end{array} \right )
\end{align*}
should be an element of $L^{\infty}(\g)\overline{\otimes}(\ell^{\infty}-\bigoplus_{n\in \n} M_2)$.
However, this is impossible because $\left \|q_n^{-1}c_n^* \right\|_{L^{\infty}(\g)}=q_n^{-1}\rightarrow \infty$ as $n\rightarrow \infty$. This implies that the completely bounded representation $\displaystyle \pi:L^1(\g)\rightarrow \ell^{\infty}-\bigoplus_{n\in \n}M_2$ correspondng to $\overline{V}$ is not similar to a $*$-representation.

\section{New examples of amenable Kac-type quantum groups with the Day-Dixmier property} \label{section:positive}

The results of Section \ref{section:nonKac} imply that, if we want to classify the amenable locally compact quantum groups with the Day-Dixmier property, it is reasonable to first restrict our attention to the framework of Kac algebras. The main purpose of this section is to exhibit several new classes of examples that do have the Day-Dixmier property. In particular, we will establish the affirmative answer on all of amenable discrete quantum groups of Kac type and the duals of certain crossed products.

As a first step in this direction, we will show that the idea of [Theorem 6.2, \cite{BrDaSa13}] (which shows that if $\g$ is compact and of Kac type, then any completely bounded representation $\pi:L^1(\g) \to B(H_\pi)$ is similar to a $\ast$-representation without any a priori complete boundedness assumptions on $\check{\pi}$) is still valid for the much wider class of examples where $\g$ is of Kac type, the left Haar weight $\varphi$ is tracial and $\widehat{\g}$ is co-amenable. Of course, in this case, the antipode $S$ coincides with the unitary antipode $R$ and $R\circ *=*\circ R$. Also, there exists a modular element $\delta$ that is positive element affiliated to the center of $L^{\infty}(\g)$ such that $\varphi(x)=\varphi(R(x)\delta).$

\begin{theorem} \label{thm:tracial-mean}

Let $\g$ be a locally compact quantum group of Kac type such that $\widehat{\g}$ is co-amenable and the left Haar weight $\varphi$ is tracial. Suppose that the net $(\xi_j)_j$ (coming from the definition of co-amenability in Section \ref{amen-coamen}) is chosen to be $(\xi_j)_j\subseteq \mathfrak{n}_{\varphi}\cap  Z(L^{\infty}(\g))$ where $Z(\cdot)$ denotes the center. Then for any completely bounded representation $\pi:L^1(\g)\rightarrow B(H_{\pi})$  there exists an invertible $T\in B(H_{\pi})$ such that 
\begin{align*}
(a)~& T\circ \pi(\cdot)\circ T^{-1}\mathrm{~is~a~}*-\mathrm{representation}\\
(b)~& \left \|T\right \|\left \|T^{-1}\right \|\leq \left \|\pi\right \|_{cb}^4.
\end{align*}
\end{theorem}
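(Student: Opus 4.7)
The strategy mirrors the proof of \cite[Theorem 6.2]{BrDaSa13} for compact Kac algebras, adapted to the locally compact setting using our three hypotheses (Kac type, traciality of $\varphi$, co-amenable $\widehat{\g}$ with central amenability net). The main step is to show that $\pi$ admits a completely bounded unital homomorphic extension $\Phi:C_0^u(\widehat{\g})\to B(H_\pi)$ with $\Phi\circ\lambda=\pi$ and $\|\Phi\|_{cb}\le\|\pi\|_{cb}^2$. Granted this, co-amenability of $\widehat{\g}$ identifies $C_0^u(\widehat{\g})$ with the nuclear $C^*$-algebra $C_0(\widehat{\g})$, and the fact that nuclear $C^*$-algebras have completely bounded similarity degree at most $2$ (cf.\ Subsection \ref{pre:degree}) yields an invertible $T\in B(H_\pi)$ such that $T\Phi(\cdot)T^{-1}$ is a $\ast$-homomorphism and $\|T\|\|T^{-1}\|\le\|\Phi\|_{cb}^2\le\|\pi\|_{cb}^4$. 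Since $\lambda$ is a $\ast$-homomorphism on $L^1_\sharp(\g)$, pre-composition makes $T\pi(\cdot)T^{-1}=T\Phi(\lambda(\cdot))T^{-1}$ the desired $\ast$-representation.

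For the construction of $\Phi$, let $V\in L^\infty(\g)\overline{\otimes} B(H_\pi)\subseteq B(L^2(\g))\overline{\otimes} B(H_\pi)$ be the corepresentation of $\pi$, so $\|V\|=\|\pi\|_{cb}$ and $\pi(w)=(w\otimes\mathrm{id})V$. For each $j$, write $\xi_j=\Lambda_\varphi(x_j)$ with $x_j\in\mathfrak{n}_\varphi\cap Z(L^\infty(\g))$, and let $\omega_{\xi_j}(\cdot)=\langle\cdot\,\xi_j,\xi_j\rangle$ be the associated vector state on $B(L^2(\g))$. I consider the completely positive maps
\[
\Phi_j:B(L^2(\g))\to B(H_\pi),\qquad \Phi_j(a)=(\omega_{\xi_j}\otimes\mathrm{id})\bigl[V^*(a\otimes 1_{H_\pi})V\bigr],
\]
which satisfy the uniform bound $\|\Phi_j\|_{cb}\le\|V\|^2=\|\pi\|_{cb}^2$. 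The key claim is that for every $w\in L^1(\g)$,
\[
\Phi_j(\lambda(w))\to\pi(w)\quad\text{in the weak operator topology of }B(H_\pi)\text{ as }j\to\infty.
\]
Granted this, any cluster point $\Phi$ of $(\Phi_j)_j$ in the point weak $*$-topology within the closed cb ball of radius $\|\pi\|_{cb}^2$ (compact by Banach--Alaoglu) satisfies $\Phi\circ\lambda=\pi$, and multiplicativity of $\Phi$ on $C_0(\widehat{\g})$ follows from multiplicativity of $\pi$ together with the density of $\lambda(L^1(\g))$ in $C_0(\widehat{\g})$.

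The main obstacle is the convergence $\Phi_j(\lambda(w))\to\pi(w)$. My plan is to rewrite $V^*(\lambda(w)\otimes 1)V$ using the corepresentation identity $(\Delta\otimes\mathrm{id})V=V_{13}V_{23}$ combined with $\Delta(x)=W^*(1\otimes x)W$, so that the expression becomes a product involving slices of $W$ and $V$. Then the traciality of $\varphi$ together with the centrality of $x_j$ (which renders $\omega_{\xi_j}(\cdot)=\varphi(x_jx_j^*\cdot)$ tracial on $L^\infty(\g)$) allows cyclic cancellations of extraneous slot factors that would otherwise obstruct the argument. The final collapse to $\pi(w)=(w\otimes\mathrm{id})V$ is effected by the co-amenability of $\widehat{\g}$, applied in the form $\omega_{\xi_j}|_{C_0(\widehat{\g})}\to\widehat{\epsilon}$ in the weak $*$-topology, which reduces the residual $W$-slices to their counit values $\widehat{\epsilon}(\lambda(w))=w(1)$. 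The delicate technical point is managing the non-commutativity of $V\in L^\infty(\g)\overline{\otimes} B(H_\pi)$ with $\lambda(w)\otimes 1\in L^\infty(\widehat{\g})\otimes\mathbb{C}$ inside $B(L^2(\g))\overline{\otimes} B(H_\pi)$; it is precisely here that the tracial and centrality hypotheses are essential.
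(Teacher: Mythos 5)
Your overall frame --- extend $\pi$ through $\lambda$ to a bounded homomorphism $\Phi$ on $C_0(\widehat{\g})=C_0^u(\widehat{\g})$ with norm controlled by $\|\pi\|_{cb}^2$, then invoke nuclearity of $C_0(\widehat{\g})$ and compose back with $\lambda$ --- is exactly the paper's skeleton, and that part of your argument is fine. The gap is in the construction of $\Phi$: the key claim that $\Phi_j(\lambda(w))=(\omega_{\xi_j}\otimes\mathrm{id})\bigl[V^*(\lambda(w)\otimes 1)V\bigr]\to\pi(w)$ is not proved (you only outline a plan of ``cyclic cancellations''), and in fact it is false under the hypotheses of the theorem. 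Take $\g=\mathbb{Z}$ (Kac, tracial Haar weight, co-amenable dual, every vector central) and $\pi$ the character $f\mapsto\sum_n f(n)z^n$ for a fixed non-real $z$ in the unit circle, so $V=(z^n)_n$ is unitary. A direct computation gives $V^*(\lambda(f)\otimes 1)V=\sum_h f(h)\,\overline{z}^{\,h}\lambda_h$, hence $\Phi_j(\lambda(f))\to\sum_h f(h)z^{-h}=\check{\pi}(f)\neq\pi(f)$. Structurally, the corepresentation identity only yields $V_{23}W_{12}=W_{12}V_{13}V_{23}$, so conjugation in your order leaves an uncancellable factor of the form $V^*V$ (and the other order leaves $VV^*$); for a genuinely non-unitary $V$ the cluster point is of the form $P\,\check{\pi}(\cdot)$ with $P$ a Day--Dixmier-type average, which is neither $\pi$ nor multiplicative. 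Removing these factors is equivalent to invertibility of $V$, i.e.\ to complete boundedness of $\check{\pi}$ --- precisely the hypothesis the theorem is designed to avoid, so this route cannot work as stated.

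The paper's proof of Theorem \ref{thm:tracial-mean} proceeds differently, on the predual side: it writes $\langle\pi(w)\alpha,\beta\rangle=\langle (T^{\pi}_{\alpha,\beta})^*,w^{\sharp}\rangle=\lim_j\langle (T^{\pi}_{\alpha,\beta})^*\widehat{\lambda}(\widehat{w}_{\xi_j,\xi_j}),w^{\sharp}\rangle$ and shows that $(T^{\pi}_{\alpha,\beta})^*\widehat{\lambda}(\widehat{w}_{\xi_j,\xi_j})=\widehat{\lambda}(\widehat{w}_j)$ for an explicit $\widehat{w}_j=\sum_i\widehat{w}_{a_i\xi_j,\,R(b_i^*)\xi_j}\in L^1(\widehat{\g})$ with $\|\widehat{w}_j\|\le\|\pi\|_{cb}^2\|\alpha\|\,\|\beta\|$; traciality and centrality of the $\xi_j$ enter exactly in the estimate $\sum_i\|R(b_i^*)\xi_j\|^2\le\|\pi\|_{cb}^2\|\alpha\|^2$ (one commutes $\xi_j$ past $R(b_i^*)$ under the trace), not in any cancellation inside a conjugated operator expression. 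This gives $\|\Phi\|\le\|\pi\|_{cb}^2$ directly for the map $\lambda(w)\mapsto\pi(w)$, after which nuclearity yields $\|T\|\,\|T^{-1}\|\le\|\Phi\|^2\le\|\pi\|_{cb}^4$ as in your final step. To repair your write-up you would need to replace the averaging formula by an argument of this duality type (or otherwise prove boundedness of $\lambda(w)\mapsto\pi(w)$ on $C_0(\widehat{\g})$); as it stands, the central claim of your construction fails already for $\g=\mathbb{Z}$.
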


\begin{proof}
We will use the notation and adapt the methodology presented in \cite[Theorem 6.1]{BrDaSa13} to the cases under consideration.  Given a completely bounded representation $\pi:L^1(\g)\rightarrow B(H_{\pi})$, let us define a homomorphism $\Phi:\lambda(L^1(\g))\subseteq C_0(\widehat{\g})\rightarrow B(H_{\pi}), \lambda(w)\mapsto \pi(w)$.  Our goal is to show that $\Phi$ is bounded with $\|\Phi\|_{C_0(\widehat{\g}) \to B(H_\pi)} \le \|\pi\|_{cb}^2$.  

For any $\alpha,\beta\in H_{\pi}$ and $w\in L^1(\g)$, as in the proof of \cite[Theorem 4.5]{BrDaSa13}, we have
\begin{align*}
\left |\langle \Phi(\lambda(w))\alpha ,\beta\rangle_{H_{\pi}}\right | &= \left |\langle \pi(w)\alpha ,\beta\rangle_{H_{\pi}}\right | \\
&=\left | \langle T^{\pi}_{\alpha,\beta},w\rangle_{L^{\infty}(\g),L^1(\g)}\right | \\
&=\left | \langle  (T^{\pi}_{\alpha,\beta})^*,w^\sharp \rangle_{L^{\infty}(\g),L^1(\g)}\right |\\
&=\lim_j \left |\langle (T^{\pi}_{\alpha,\beta})^*\widehat{\lambda}(\widehat{w}_{\xi_j, \xi_j}),w^\sharp \rangle_{L^{\infty}(\g),L^1(\g)}  \right |,
\end{align*}
where $(\widehat{w}_{\xi_j, \xi_j})_j \subset L^1(\widehat{\g})$ is the bounded approximate identiy given in the theorem statement, and $T^{\pi}_{\alpha,\beta}= (\text{id} \otimes w_{\alpha,\beta})V_\pi=  (1\otimes \langle \beta |)V_{\pi}(1\otimes |\alpha \rangle) \in L^\infty(\g)$ is the coefficient function of the corepresentation $V_{\pi}\in L^{\infty}(\g)\overline{\otimes} B(H_{\pi})$  associated with the given completely bounded representation $\pi:L^1(\g)\rightarrow B(H_{\pi})$.

Next, we show the existence of the functionals
\[\widehat{w}_j = \sum_i \widehat{w}_{a_i\xi_j,R(b_i^*)\xi_j}\in L^1(\widehat{\g})\] which have the property that $(T^{\pi}_{\alpha,\beta})^*\widehat{\lambda}(\widehat{w}_{\xi_j, \xi_j}) = \widehat{\lambda}(\widehat {w}_j)$.  Here, $(f_i)_i$ is an orthonormal basis of $H_{\pi}$, $a_i=(1\otimes \langle f_i |)V_{\pi}^*(1\otimes |\beta\rangle)$ and $R(b_i^*)=R((1\otimes \langle f_i |)V_{\pi}(1\otimes |\alpha \rangle))$. To check this, note that \[\sum_i \|\widehat{w}_{a_i\xi_j,R(b_i^*)\xi_j}\|_{L^1(\widehat\g)} \le \Big( \sum_i \left \|a_i\xi_j\right \|_{L^2(\g)}^2\Big)^{1/2} \Big(\sum_i \left \|R(b_i^*)\xi_j \right \|_{L^2(\g)}^2 \Big)^{1/2}. \] Moreover,  
\begin{align*}
\sum_i \left \|a_i\xi_j\right \|_{L^2(\g)}^2&= \sum_i \langle \xi_j |a_i^*a_i |\xi_j \rangle \\
&=\widehat{w}_{\xi_j,\xi_j}(\sum_i a_i^*a_i )\\
&=\widehat{w}_{\xi_j,\xi_j}(\sum_{i}  (1\otimes \langle \beta |) V_{\pi} (1\otimes |f_i\rangle\langle f_i|)  V_{\pi}^* (1\otimes |\beta\rangle)\\
&=(\widehat{w}_{\xi_j,\xi_j}\otimes w_{\beta,\beta})( V_{\pi}V_{\pi}^* )\\
&\leq \left \|V_{\pi}V_{\pi}^*\right \| \left \|\beta\right \|^2\leq \left \|\pi\right \|_{cb}^2\left \|\beta\right \|^2
\end{align*}
and
\begin{align*}
\sum_i \left \|R(b_i^*)\xi_j \right \|_{L^2(\g)}^2&= \sum_i \widehat{w}_{\xi_j,\xi_j}( R(b_i)R(b_i^*))\\
&=\sum_i \varphi(R(b_i)\xi_j\xi_j^* R(b_i^*))\\
&=\sum_i \varphi(\xi_j^*R(b_i^*)R(b_i)\xi_j)\\
&=(\widehat{w}_{\xi_j,\xi_j}\circ R\otimes w_{\alpha,\alpha})(V_{\pi}^*V_{\pi})\\
&\leq \left \|\pi\right\|_{cb}^2 \left \|\alpha \right \|^2.
\end{align*}

Hence $ \widehat{w}_j := \sum_i \widehat{w}_{a_i\xi_j,R(b_i^*)\xi_j}$ absolutely converges in $L^1(\widehat{\g})$ with norm less than $\left \|\pi\right \|_{cb}^2 \left \|\alpha\right \|\left \|\beta\right\|$. The fact that  $(T^{\pi}_{\alpha,\beta})^*\widehat{\lambda}(\widehat{w}_{\xi_j, \xi_j}) = \widehat{\lambda}(\widehat {w}_j)$ now follows exactly as in \cite[Theorem 4.7]{BrDaSa13}.

Therefore, we have
\begin{align*}
\left |\langle \pi(w)\alpha,\beta\rangle_{H_{\pi}}\right |&=\lim_{j}\left | \langle (T^{\pi}_{\alpha,\beta})^*\widehat{\lambda}(\widehat{w}_{\xi_j,\xi_j}),w^\sharp \rangle_{L^{\infty}(\g),L^1(\g)}\right |\\
&=\lim_{j} \left | \langle \widehat{\lambda}(\sum_i \widehat{w}_{a_i\xi_j,R(b_i)^*\xi_j}),w^\sharp\rangle_{L^{\infty}(\g),L^1(\g)} \right |\\
&=\lim_{j} \left | \langle \lambda(w)^*, \sum_i \widehat{w}_{a_i\xi_j,R(b_i)^*\xi_j} \rangle_{L^{\infty}(\widehat{\g}),L^1(\widehat{\g})} \right |\\
&\leq \left \|\lambda(w)\right \|_{C_0(\widehat{\g})}\left \|\pi\right\|_{cb}^2 \left \|\alpha \right \|\left \|\beta\right \|,
\end{align*}
which shows $\left \|\Phi\right \|\leq \left \|\pi\right\|_{cb}^2$. Finally, since $\Phi$ extends to a bounded homomorphism on $C_0(\widehat{\g})$, and $C_0(\widehat{\g})$ is a nuclear C$^*$-algebra, there exists an invertible $T\in B(H_{\pi})$ such that
\begin{align*}
(a)&~T\circ \Phi(\cdot) \circ T^{-1}~\mathrm{is~a~}*-\mathrm{homomorphism}\\
(b)&~\left \|T\right \|\left \|T^{-1}\right \|\leq \left \|\Phi\right \|^2\leq \left \|\pi\right\|_{cb}^4.
\end{align*}

Then the formula $[T\circ \Phi(\cdot)\circ T^{-1}]\circ \lambda = T\circ \pi(\cdot) \circ T^{-1}$ completes the proof.
\end{proof}

Using Theorem \ref{thm:tracial-mean} as our starting point, we now describe some new examples of quantum groups with the Day-Dixmier property.

\subsection{Example 1: Amenable discrete quantum groups of Kac type}

Outside the realm of classical amenable groups and certain duals of locally compact groups, the only subclass of truly ``quantum'' groups known to satisfy the Day-Dixmier property are the compact quantum groups of Kac type. Therefore, it is quite natural to first consider the dual setting: discrete quantum groups of Kac type. 

It is known that any co-amenable compact quantum group $\widehat{\g}$ of Kac type admits a contractive approximate identity $(\widehat{w}_{\xi_j,\xi_j})_j\subseteq B(L^2(\g))_*$ of $L^1(\widehat{\g})$ such that $\xi_j\in Z(L^{\infty}(\g))\cap L^2(\g)$.  See for example \cite[Theorem 7.3]{Br16} and \cite[Theorem 5.15]{KrRu99}.  Hence we can conclude from Theorem \ref{thm:tracial-mean} that if $\g$ is an  amenable discrete quantum group of Kac type, then $L^1(\g)$ has the similarity property with $d_{cb}(L^1(\g))\leq 4$.

\begin{remark}
Even if we suppose that $\g$ is a discrete quantum group and that $u\in M_n(L^{\infty}(\g))$ is a finite dimensional unitary representation of $\g$, it is not clear that its contragradient $u^c=\overline{u}=(u_{i,j}^*)_{1\leq i,j\leq n}$ is invertible. This question was raised in \cite{so05} and affirmatively answered by \cite{Da13}.

The Day-Dixmier property provides a generalized view on infinite dimensional representations. Let us suppose that $\g$ is an amenable discrete quantum group of Kac type and $V\in L^{\infty}(\g)\overline{\otimes}B(H)$ is a unitary representation of $\g$. Then the Day-Dixmier property implies that its contragredient $V^c$ is automatically invertible with $\left \|(V^c)^{-1}\right \|_{L^{\infty}(\g)\overline{\otimes} B(\overline{H})}\leq \left \|V^c \right\|_{L^{\infty}(\g)\overline{\otimes} B(\overline{H})}^4$ (in fact, $\leq \left \|V^c\right\|_{L^{\infty}(\g)\overline{\otimes} B(\overline{H})}^2$, as shown in the Appendix) whenever $V^c$ exists in $L^{\infty}(\g)\overline{\otimes}B(\overline{H})$.
\end{remark}

\subsection{Example 2: Some Fourier algebras of crossed products $L^{\infty}(N)\rtimes_{\alpha}H$} 

For now, we have the affirmative answer for amenable locally compact groups $G$, a large class of their duals $\widehat{G}$, compact Kac algebras and amenable discrete Kac algebras. In this subsection, we will present new examples which are non-compact, non-discrete, non-commutative and non-cocommutative in general.

Recall that for the crossed product quantum group $\g=(L^{\infty}(N)\rtimes_{\alpha} H,\Delta)$, the von Neumann algebra associated with the dual $\widehat{\g}$ is 
\[L^{\infty}(\widehat{\g})=L^{\infty}(H)\overline{\otimes} VN(N) \subseteq B(L^2(H\times N))\] 
and the left Haar weight on $L^{\infty}(\widehat{\g})$ is given by $\widehat{\varphi}=\varphi_H \otimes \widehat{\varphi}_N$ where $\varphi_H$ is the left Haar measure on $L^{\infty}(H)$ and $\widehat{\varphi}_N$ is the Plancherel weight on $VN(N)$. Note that the left Haar weight $\widehat{\varphi}$ on $L^{\infty}(\widehat{\g})$ is tracial if and only if $N$ is unimodular. 

Under the condition that $H$ is amenable and $N$ is discrete, Proposition \ref{prop:bai} tells us that a contractive approximate identity of $L^{1}(\g)$ is described by a net 
\[(w_{f_i\otimes 1_{VN(N)},f_i\otimes 1_{VN(N)}})_i=(\varphi_{L^{\infty}(H)}(\cdot f_i^2)\otimes \widehat{\varphi}_{VN(N)}(\cdot))_i,\] 
where $(w_{f_i,f_i})_i$ is a contractive approximate identity in $A(H)$, and $\varphi_{L^{\infty}(H)}$, $\widehat{\varphi}_{VN(N)}$ are left Haar weights on $L^{\infty}(H)$ and $VN(N)$ respectively. Moreover,
\[(f_i\otimes 1_{VN(N)})_i\subseteq Z(L^{\infty}(\widehat{\g}))=L^{\infty}(H)\overline{\otimes}Z(VN(N)).\]

Hence we can conclude that the Fourier algebra $A(\g)=L^1(\widehat{\g})$ of the crossed product $\g=(L^{\infty}(N)\rtimes_{\alpha} H,\Delta)$ has the Day-Dixmier property with $d_{cb}(A(\g))\leq 4$ whenever $N$ is discrete and $H$ is amenable.

\appendix
\section{Similarity degree} \label{appendix}

Under the assumption of the Day-Dixmier property for $\g$, calculating the completely bounded similarity degree $d_{cb}(L^1(\g))$ is worthy of itself since $d_{cb}(L^1(G))\leq 2$ characterizes the amenability of $G$ in the category of locally compact groups $G$. On the cocommutative side, one of the main results of \cite{LeSaSp16} is $d_{cb}(A(G))\leq 2$ for a large class of locally compact groups. In this appendix, we will show that $d_{cb}(L^1(\g))\leq 2$ whenever $\g$ is one of the following:
\begin{itemize}
\item A compact quantum group of Kac type.
\item An amenable discrete quantum group of Kac type
\item The dual of $\widehat{\g} = (L^{\infty}(N)\rtimes_{\alpha}H,\Delta)$ where $N$ is discrete and $H$ is amenable.
\end{itemize}

The key tool here is in the following theorem.

\begin{theorem}\label{thm1}
Let $\g$ be a locally compact quantum group and we fix two contractions $T_1,T_2\in B(L^2(\widehat{\g}))$. Also, suppose that there exists a contractive approximate identity $(w_{\xi_i,\eta_i})_i\subseteq L^1(\widehat{\g})$ such that 
\begin{align*}
&\lim_i \mu (\left |W(T_1\otimes \mathrm{id})W(T_2\otimes \mathrm{id})(|\xi_i\rangle\otimes 1)-|\xi_i\rangle \otimes 1 \right |^2)=0
\end{align*}
for any positive $\mu\in M(\widehat{\g})_+$. Then we have that the multiplication map $m_2:L^1(\g)\otimes_h L^1(\g)\rightarrow C_0(\widehat{\g})$ is a complete quotient map.
\end{theorem}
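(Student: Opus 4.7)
The plan is to attack this via operator-space duality. By standard duality for Haagerup tensor products, $m_2$ is a complete quotient map if and only if its Banach adjoint
\[
m_2^*: M(\widehat{\g}) \to (L^1(\g) \otimes_h L^1(\g))^* \cong CB(L^1(\g), L^\infty(\g))
\]
is a complete isometry. The map $m_2^*$ sends $\mu \in M(\widehat{\g})$ to the jointly completely bounded bilinear form $B_\mu(w_1, w_2) = \mu(\lambda(w_1)\lambda(w_2))$. Using $(\Delta \otimes \mathrm{id})W = W_{13}W_{23}$, one has $\lambda(w_1)\lambda(w_2) = (w_1 \otimes w_2 \otimes \mathrm{id})(W_{13}W_{23})$, so that the upper bound $\|B_\mu\|_{cb} \leq \|\mu\|$ is automatic from the canonical Christensen--Effros--Sinclair factorization of $B_\mu$ through the completely contractive map $\lambda$ on both sides.

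The substantive step is the reverse inequality $\|\mu\| \leq \|B_\mu\|_{cb}$, together with its matrix amplifications. Given $X$ in the unit ball of $M_n(C_0(\widehat{\g}))$, one must exhibit a net $(Z^X_i) \subseteq M_n(L^1(\g) \otimes L^1(\g))$ of uniformly bounded Haagerup norm such that $m_2^{(n)}(Z^X_i) \to X$ in norm. I would construct $Z^X_i$ directly from the data in the hypothesis: expand the two copies of $W$ in the operator $W(T_1 \otimes \mathrm{id})W(T_2 \otimes \mathrm{id})$ against an orthonormal basis of $L^2(\widehat{\g})$, and package the resulting slices against the approximate identity vector $\xi_i$ (together with a matrix amplification carrying the data of $X$) into a Haagerup tensor whose row and column $\ell^2$-norms are controlled by $\|T_1\|\|T_2\|\|X\| \leq \|X\|$ via Parseval in the chosen basis. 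The hypothesis that
\[
\lim_i \mu\bigl(\bigl|W(T_1\otimes\mathrm{id})W(T_2\otimes\mathrm{id})(|\xi_i\rangle\otimes 1) - |\xi_i\rangle\otimes 1\bigr|^2\bigr) = 0
\]
for every positive $\mu \in M(\widehat{\g})_+$ then furnishes the required norm convergence $m_2^{(n)}(Z^X_i) \to X$, since testing against $X$ reduces the convergence to precisely this squared-norm limit along the approximate identity.

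The main obstacle will be carefully matching the row/column Hilbert-module structure of the Haagerup tensor product with the two-sided expansion of $W(T_1\otimes\mathrm{id})W(T_2\otimes\mathrm{id})$. The squared-norm form of the hypothesis, evaluated against arbitrary positive $\mu \in M(\widehat{\g})_+$, is precisely the inner-product-valued convergence in the Hilbert $C_0(\widehat{\g})$-module structure on $L^2(\widehat{\g}) \otimes C_0(\widehat{\g})$ that simultaneously controls the row-square-sum and column-square-sum factors of the approximating Haagerup tensor; without this form of the hypothesis one could not bound $\|Z^X_i\|_h$ uniformly in $i$. The matrix amplification from $n=1$ to general $n$, while conceptually routine by allowing $X$ to enter as a matrix-valued coefficient during the slicing procedure, still demands a careful verification of the normalization and Haagerup-norm control at the matrix level, which is the second significant bookkeeping task.
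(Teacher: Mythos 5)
Your opening move coincides with the paper's: by duality, $m_2$ is a complete quotient map iff its adjoint $\Gamma=m_2^*:M(\widehat{\g})\to L^\infty(\g)\otimes_{eh}L^\infty(\g)$, $\mu\mapsto(\mathrm{id}\otimes\mathrm{id}\otimes\mu)(W_{13}W_{23})$, is a complete isometry, and the easy inequality $\|m_2^*(\mu)\|_{cb}\le\|\mu\|$ is automatic. The gap is that the substantive direction is only announced, and the two concrete claims you make about your construction do not hold as stated. Expanding $W(T_1\otimes\mathrm{id})W(T_2\otimes\mathrm{id})$ along an orthonormal basis $(e_r)$ of $L^2(\widehat{\g})$ leads to candidate tensors of the form $Z_i=\sum_r w_{xT_1e_r,\eta_i}\otimes w_{T_2\xi_i,e_r}$, so that formally $m_2(Z_i)=(\langle\eta_i|\otimes 1)W(xT_1\otimes 1)W(T_2\otimes 1)(|\xi_i\rangle\otimes 1)$. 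But (i) the ``Parseval'' control you invoke is not available in the naive sense: the $\ell^2$-sums of vector norms such as $\sum_r\|xT_1e_r\|^2$ are Hilbert--Schmidt type quantities and diverge; what one can control is the completely bounded norm of the row and column maps $y\mapsto(\langle y\,xT_1e_r,\eta_i\rangle)_r$ and $y\mapsto(\langle yT_2\xi_i,e_r\rangle)_r$, and even then the infinite series defines an element only of the \emph{extended} Haagerup tensor product, not of $L^1(\g)\otimes_h L^1(\g)$, so a truncation-and-limit argument is unavoidable; (ii) the hypothesis is tested only against positive $\mu\in M(\widehat{\g})_+$, so it can yield at best weak convergence of $m_2^{(n)}(Z^X_i)$ to $X$, never the norm convergence you assert; repairing this needs a convexity (Mazur) argument on the image of the Haagerup ball, a Cauchy--Schwarz decomposition of arbitrary $\mu$ into four positive parts, and the approximate identity property $\mu\star w_{\xi_i,\eta_i}\to\mu$, none of which appear in your sketch. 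These missing steps are exactly where the content of the theorem lies.

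For comparison, the paper never constructs preimages: it stays on the dual side throughout. It embeds $L^\infty(\g)\otimes_{eh}L^\infty(\g)$ into the normal completely bounded maps on $B(L^2(\widehat{\g}))$ via $A\otimes B\mapsto(T\mapsto ATB)$, applies the complete contraction $A\mapsto T_1AT_2$ in the second leg, evaluates $(\mathrm{id}_n\otimes\Gamma)(\mu)$ at $X=(x_{s,t})\in\mathrm{Ball}(M_m(C_0(\widehat{\g})))$, rewrites $W(x\otimes 1)W^*=\Sigma\widehat{\Delta}(x)\Sigma$, pairs with the vector states $w_{\xi_i,\eta_i}$, and uses the displayed hypothesis (through Cauchy--Schwarz on the positive parts of each $\mu_{i,j}$) to replace $W(T_1\otimes\mathrm{id})W(T_2\otimes\mathrm{id})(|\xi_i\rangle\otimes 1)$ by $|\xi_i\rangle\otimes 1$; the approximate identity then gives $\lim_i\|((\mu_{i,j}\star w_{\xi_i,\eta_i})(x_{s,t}))\|=\|(\mu_{i,j}(x_{s,t}))\|$, whence $\Gamma$ is completely isometric. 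This route is what makes the square-modulus form of the hypothesis directly usable and avoids all Haagerup-norm bookkeeping for infinite sums of vector functionals. If you wish to keep your primal approach, you must supply the cb row/column estimates, the truncation, and the weak-to-norm convexity step explicitly; as written, the proposal asserts the conclusion of the hard step rather than proving it.
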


\begin{proof}
It is sufficient to show that the adjoint map $\Gamma=m_{2}^*:M(\widehat{\g})\rightarrow L^{\infty}(\g)\otimes_{eh}L^{\infty}(\g)$,
\[\mu\mapsto (\mathrm{id}\otimes \mathrm{id} \otimes \mu)(W_{13}W_{23}),\]
is a complete isometry.

Recall that $L^{\infty}(\g)\otimes_{eh} L^{\infty}(\g)$ is completely isometrically embedded into 
\[B(L^2(\widehat{\g}))\otimes_{eh} B(L^2(\widehat{\g}))\cong CB^{\sigma}(B(L^2(\widehat{\g})),B(L^2(\widehat{\g})))\] 
under the identification
\[\iota(A\otimes B):T\mapsto A T B~\mathrm{for~all~}A,B\in B(L^2(\widehat{\g})).\]

Hence, for any $n\in \n$ and $\mu=(\mu_{i,j})_{1\leq i,j\leq n}\in M_n(M(\widehat{\g}))$, $(\mathrm{id}_n\otimes \Gamma)(\mu)=(\Gamma(\mu_{i,j}))_{1\leq i,j\leq n}$ can be realized as an element in $CB(B(L^2(\widehat{\g})),M_n(B(L^2(\widehat{\g}))))$, which is given by
\[T\mapsto ((\Gamma(\mu_{i,j}))(T))_{1\leq i,j\leq n}.\]

First of all, a map $\Phi_{T_1,T_2}:B(L^{2}(\widehat{\g}))\rightarrow B(L^2(\widehat{\g}))$, $A\mapsto T_1 A 
T_2$ is a complete contraction for any contractions $T_1,T_2\in B(L^2(\widehat{\g}))$. Moreover, $(\mathrm{id}\otimes  \Phi_{T_1,T_2}):B(L^2(\widehat{\g}))\otimes_{eh}B(L^2(\widehat{\g}))\rightarrow B(L^2(\widehat{\g}))\otimes_{eh}B(L^2(\widehat{\g}))$ is also a complete contraction. Secondly, for any $m,n\in \n$, $\mu=(\mu_{i,j})_{1\leq i,j\leq n}\in M_n(M(\widehat{\g}))$ and $X=(x_{s,t})_{1\leq s,t\leq m} \in \mathrm{Ball}(M_m(C_0(\widehat{\g})))$, we have that
\begin{align*}
&\left \|(id_n\otimes \Gamma)(\mu)\right\|\\
&\geq \left \| [(\mathrm{id}_n\otimes \mathrm{id}\otimes \Phi_{T_1,T_2})((\mathrm{id}_n\otimes \Gamma)(\mu))](X)\right \|_{M_{mn}(B(L^2(\widehat{\g})))}\\
&= \left \| [(\mathrm{id}_n\otimes ((\mathrm{id}\otimes \Phi_{T_1,T_2})\circ \Gamma))(\mu)](X)\right \|_{M_{mn}(B(L^2(\widehat{\g})))}\\
&=\left \|((\mathrm{id}\otimes\mu_{i,j})(W( x_{s,t}\otimes \mathrm{id})( T_1\otimes \mathrm{id})W( T_2\otimes \mathrm{id})))_{1\leq i,j\leq n, 1\leq s,t\leq m}\right \|_{M_{mn}(B(L^2(\widehat{\g})))}\\
&=\left \|((\mathrm{id}\otimes \mu_{i,j})(\Sigma\widehat{\Delta}(x_{s,t})\Sigma W( T_1\otimes \mathrm{id})W( T_2\otimes \mathrm{id})))_{1\leq i,j\leq n, 1\leq s,t\leq m}\right \|_{M_{mn}(B(L^2(\widehat{\g})))}\\
&\geq \sup_k \left \|((w_{\xi_k,\eta_k}\otimes \mu_{i,j})(\Sigma\widehat{\Delta}(x_{s,t})\Sigma W( T_1\otimes \mathrm{id})W( T_2\otimes \mathrm{id})))_{1\leq i,j\leq n, 1\leq s,t\leq m}\right\|_{M_{mn}}\\
&\geq \lim_k \left \|(\mu_{i,j}((\langle \eta_k|\otimes 1)(\Sigma\widehat{\Delta}(x_{s,t})\Sigma W( T_1\otimes \mathrm{id})W( T_2\otimes \mathrm{id}))(| \xi_k\rangle \otimes 1)))_{1\leq i,j\leq n, 1\leq s,t\leq m}\right\|_{M_{mn}}
\end{align*}

Note that each $\mu_{i,j}$ is expressed as
\[\mu_{i,j}=w^1_{i,j}-w^2_{i,j}+i(w^3_{i,j}-w^4_{i,j})\]
for some positive linear functionals $w^s_{i,j}\in M(\widehat{\g})_+$, $1\leq s\leq 4$. Now, by the assumption, we know that for each $1\leq i,j\leq n, 1\leq s,t\leq m$
\begin{align*}
&\lim_k \left |\mu_{i,j}((\langle \eta_k|\otimes 1)(\Sigma\widehat{\Delta}(x_{s,t})\Sigma [W( T_1\otimes \mathrm{id})W( T_2\otimes \mathrm{id})(| \xi_k\rangle \otimes 1)- |\xi_k\rangle\otimes 1]))\right |\\
&\leq \lim_k \sum_{s=1}^4 \left |\mu_{i,j}^s ((\langle \eta_k|\otimes 1)(\Sigma \widehat{\Delta}(x_{s,t})\Sigma  [W( T_1\otimes \mathrm{id})W( T_2\otimes \mathrm{id})(| \xi_k\rangle \otimes 1)- |\xi_k\rangle\otimes 1]))\right |\\
&\leq \lim_k \sum_{s=1}^4 \left \|\mu_{i,j}^s \right \|^{\frac{1}{2}}_{M(\widehat{\g})_+}\mu_{i,j}^s(\left | W(T_1\otimes \mathrm{id})W(T_2\otimes \mathrm{id})(|\xi_k\rangle \otimes 1)-|\xi_k\rangle\otimes 1\right|^2)^{\frac{1}{2}}\\
&= 0.
\end{align*}

Therefore, we can conclude that
\begin{align*}
&\left \|(\mathrm{id}_n\otimes \Gamma)(\mu)\right\|\\
&\geq \lim_k \left \| (\mu_{i,j}((\langle \eta_k|\otimes 1)(\Sigma \widehat{\Delta}(x_{s,t})\Sigma (|\xi_k\rangle\otimes 1))))_{1\leq i,j\leq n, 1\leq s,t\leq m} \right\|_{M_{mn}}\\
&\geq \lim_k \left \| (\mu_{i,j}((1\otimes \langle \eta_k|)(\widehat{\Delta}(x_{s,t})(1\otimes |\xi_k\rangle))))_{1\leq i,j\leq n, 1\leq s,t\leq m} \right\|_{M_{mn}}\\
&=\lim_k \left \|((\mu_{i,j}\otimes w_{\xi_k,\eta_k})(\widehat{\Delta}(x_{s,t})))_{1\leq i,j\leq n, 1\leq s,t\leq m}\right\|_{M_{mn}}\\
&= \lim_k \left \|((\mu_{i,j} \star w_{\xi_k,\eta_k})(x_{s,t}))_{1\leq i,j\leq n, 1\leq s,t\leq m}\right\|_{M_{mn}}\\
&= \left \|(\mu_{i,j}(x_{s,t}))_{1\leq i,j\leq n, 1\leq s,t\leq m}\right\|_{M_{mn}}.
\end{align*}

Since $X=(x_{s,t})_{1\leq s,t\leq m}$ is arbitrary, we can see that $(id_n\otimes \Gamma)(\mu)$ is a isometry, so that $\Gamma$ is a complete isometry.
\end{proof}  

And now back to examples:

\subsection{Example 1 : compact or amenable discrete quantum groups of Kac type}

Throughout this subsection, we assume that $\g$ is a compact or amenable discrete Kac algebra. In both cases, the (left and right) Haar weight $\varphi=\psi$ on $L^{\infty}(\g)$ is tracial and the antipode $S=R$ extends to a unitary operator on $L^2(\g)$. Here we will make use of the Sweedler notation $\Delta(x)=\displaystyle \sum x_{(1)}\otimes x_{(2)}$ and the swap operator $\sigma:B(H_1\otimes H_2)\rightarrow B(H_2\otimes H_1)$, $T_1\otimes T_2\mapsto T_2\otimes T_1$.

For any $\Lambda_{\varphi}(\eta),\Lambda_{\varphi}(\eta')\in \mathfrak{n}_{\varphi}\subseteq L^2(\g)$,
\begin{align*}
&\widehat{W}(R\otimes \mathrm{id})\widehat{W}(R\otimes \mathrm{id})(\Lambda_{\varphi}\otimes \Lambda_{\varphi})(\eta\otimes \eta')\\
=&\Sigma W^* (\mathrm{id}\otimes R)W^*(\mathrm{id}\otimes R)(\Lambda_{\varphi}\otimes \Lambda_{\varphi})(\eta'\otimes \eta)\\
=&\Sigma W^*(\mathrm{id}\otimes R)(\Lambda_{\varphi}\otimes \Lambda_{\varphi})(\Delta(R(\eta))(\eta'\otimes 1))\\
=&\Sigma W^* (\mathrm{id}\otimes R)(\Lambda_{\varphi}\otimes \Lambda_{\varphi})(\sum  R(\eta_{(2)})\eta'\otimes R(\eta_{(1)}))\\
=& \Sigma W^*(\Lambda_{\varphi}\otimes \Lambda_{\varphi})(\sum R(\eta_{(2)})\eta'\otimes \eta_{(1)})\\
=&(\Lambda_{\varphi}\otimes \Lambda_{\varphi}) (\sum \eta_{(2)}\otimes \eta_{(1)} R(\eta_{(3)})\eta').
\end{align*}

Therefore, we can see that for any $\eta\in \mathfrak{n}_{\varphi}\subseteq L^2(\g)$ we have
\[\widehat{W}(R\otimes \mathrm{id})\widehat{W}(R\otimes \mathrm{id})(|\eta\rangle \otimes 1)=\sum |\eta_{(2)}\rangle \otimes \eta_{(1)} R(\eta_{(3)}).\]

\begin{lemma}\label{prop2}
Let $\g$ be a co-amenable compact quantum group of Kac type or a discrete quantum group of Kac type. If $\eta\in \mathfrak{n}_{\varphi}\subseteq  L^2(\g)$ satisfies $\Delta(\eta)=(\sigma\circ \Delta)(\eta)=\sum \eta_{(2)}\otimes \eta_{(1)}$, then
\[\widehat{W}(R \otimes \mathrm{id})\widehat{W}(R \otimes \mathrm{id})(|\eta\rangle \otimes 1)=|\eta\rangle \otimes 1.\]

\end{lemma}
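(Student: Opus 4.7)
The plan is to reduce the claim to an algebraic identity in $L^{\infty}(\g) \overline{\otimes} L^{\infty}(\g)$ and then transfer it back to the Hilbert space statement via the $L^2$-manipulations already performed in the calculation preceding the lemma. That calculation has already shown
\[
\widehat{W}(R \otimes \mathrm{id}) \widehat{W}(R \otimes \mathrm{id})(|\eta\rangle \otimes 1) = \sum |\eta_{(2)}\rangle \otimes \eta_{(1)} R(\eta_{(3)}),
\]
so the only remaining ingredient is the identity $\sum \eta_{(2)} \otimes \eta_{(1)} R(\eta_{(3)}) = \eta \otimes 1$.

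The first step is to promote the local cocommutativity $\Delta(\eta) = \sigma \circ \Delta(\eta)$ to a cyclic symmetry of the iterated coproduct. Writing $\Delta(\eta) = \sum \eta^{(1)} \otimes \eta^{(2)}$, applying $\Delta \otimes \mathrm{id}$ to both sides of $\sum \eta^{(1)} \otimes \eta^{(2)} = \sum \eta^{(2)} \otimes \eta^{(1)}$ and invoking coassociativity yields
\[
\sum \eta_{(1)} \otimes \eta_{(2)} \otimes \eta_{(3)} = \sum \eta_{(2)} \otimes \eta_{(3)} \otimes \eta_{(1)} = \sum \eta_{(3)} \otimes \eta_{(1)} \otimes \eta_{(2)},
\]
i.e., cyclic permutation of Sweedler indices leaves $(\Delta \otimes \mathrm{id})\Delta(\eta)$ invariant.

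Next, I would apply the linear map $T \colon a \otimes b \otimes c \mapsto b \otimes aR(c)$ to both sides of $\sum \eta_{(1)} \otimes \eta_{(2)} \otimes \eta_{(3)} = \sum \eta_{(3)} \otimes \eta_{(1)} \otimes \eta_{(2)}$, producing
\[
\sum \eta_{(2)} \otimes \eta_{(1)} R(\eta_{(3)}) = \sum \eta_{(1)} \otimes \eta_{(3)} R(\eta_{(2)}).
\]
The right-hand side collapses via the antipode axioms. Since $\g$ is Kac, $R$ is an involutive $\ast$-antiautomorphism, so applying $R$ to the standard identity $\sum x_{(1)} R(x_{(2)}) = \varepsilon(x) 1$ yields the reversed version $\sum x_{(2)} R(x_{(1)}) = \varepsilon(x) 1$. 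Setting $x = \eta^{(2)}$, and recalling that $\eta_{(2)} \otimes \eta_{(3)}$ realizes $\Delta(\eta^{(2)})$ in the iterated notation, the inner Sweedler sum collapses and we obtain
\[
\sum \eta_{(1)} \otimes \eta_{(3)} R(\eta_{(2)}) = \sum \eta^{(1)} \otimes \varepsilon(\eta^{(2)}) 1 = (\mathrm{id} \otimes \varepsilon)(\Delta \eta) \otimes 1 = \eta \otimes 1,
\]
which closes the algebraic identity.

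The step requiring most care is the interpretation of these Sweedler manipulations at the Hilbert space level, since individual factors of $\Delta(\eta)$ need not lie in $\mathfrak{n}_{\varphi}$. However, the preceding calculation already produced the right-hand side $\sum |\eta_{(2)}\rangle \otimes \eta_{(1)} R(\eta_{(3)})$ via repeated applications of $W^{\ast}(\Lambda_{\varphi}(x) \otimes \Lambda_{\varphi}(y)) = (\Lambda_{\varphi} \otimes \Lambda_{\varphi})(\Delta(y)(x \otimes 1))$, so the transfer from the algebraic identity in $L^{\infty}(\g) \overline{\otimes} L^{\infty}(\g)$ to the corresponding operator identity on $L^2(\g) \otimes L^2(\g)$ follows the same pattern and introduces no new technical difficulty.
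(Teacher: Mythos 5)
Your proof is correct and follows essentially the same route as the paper: both exploit the cyclic invariance of $(\Delta\otimes\mathrm{id})\Delta(\eta)$ forced by $\Delta(\eta)=(\sigma\circ\Delta)(\eta)$, then collapse one leg via the antipode--counit identity to reduce the claim to $\sum \eta_{(2)}\otimes \eta_{(1)}R(\eta_{(3)})=\eta\otimes 1$, which is plugged into the computation preceding the lemma. The only cosmetic difference is that the paper first obtains $\sum \eta_{(2)}\otimes \eta_{(3)}R(\eta_{(1)})=\eta\otimes 1$ and then applies $\mathrm{id}\otimes R$, whereas you use the second cyclic permutation together with the reversed identity $\sum x_{(2)}R(x_{(1)})=\epsilon(x)1$.
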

  
\begin{proof}
By the assumption and the co-associativity of $\Delta$, we have that
\[(\mathrm{id}\otimes \Delta)(\Delta(\eta))=(\Delta\otimes \mathrm{id})(\Delta(\eta))=\sum \eta_{(2)}\otimes \eta_{(3)}\otimes \eta_{(1)}.\]
Then
\begin{align*}
\sum \eta_{(2)}\otimes \eta_{(3)} R(\eta_{(1)})&=(\mathrm{id}\otimes m)(\mathrm{id}\otimes \mathrm{id}\otimes R)(\mathrm{id}\otimes \Delta)(\Delta(\eta))\\
&=(\mathrm{id}\otimes \epsilon')(\Delta(\eta))=\eta\otimes 1,
\end{align*}
where $m$ is the multiplication of $L^{\infty}(\g)$, $\epsilon'(a):=\epsilon(a)1$ and $\epsilon$ is the co-unit of $\g$. Then, by applying $\mathrm{id}\otimes R$ again, we have
\[\sum \eta_{(2)}\otimes \eta_{(1)} R(\eta_{(3)})=\eta\otimes 1.\]
\end{proof}

\begin{corollary}
Any compact quantum group of Kac type or amenable discrete quantum group of Kac type has the Day-Dixmier property with $d_{cb}(L^1(\g))\leq 2$.
\end{corollary}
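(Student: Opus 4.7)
The plan is to apply Proposition \ref{prop1} with $N = 2$. For both classes of $\g$ in the corollary, the coamenability of $\widehat{\g}$ is automatic: for Kac algebras amenability of $\g$ is equivalent to coamenability of $\widehat{\g}$, and compact quantum groups as well as amenable discrete quantum groups are amenable. The Day--Dixmier property for $\g$ is supplied by the results of Section \ref{section:positive}. Hence the real content is to show that $m_2 : L^1(\g) \otimes_h L^1(\g) \to C_0(\widehat{\g})$ is a complete surjection, and this is where Theorem \ref{thm1} enters.

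To apply Theorem \ref{thm1}, I would take $T_1 = T_2 = \widehat R$, the unitary antipode of $\widehat{\g}$, which under the Kac hypothesis extends to a genuine unitary on $L^2(\widehat{\g})$. For the contractive approximate identity in $L^1(\widehat{\g})$, I would use the one supplied by Example 1 of Section \ref{section:positive} (when $\g$ is amenable discrete Kac) or the analog built from finite sums of normalized minimal central projections of $L^\infty(\widehat{\g}) = \prod_\alpha B(H_\alpha)$ (when $\g$ is compact Kac). In both cases the vectors $\xi_j$ lie in $\mathfrak{n}_{\widehat{\varphi}}$ and can be arranged to be cocentral in the sense that $\widehat{\Delta}(\xi_j) = (\sigma \circ \widehat{\Delta})(\xi_j)$. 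Now Lemma \ref{prop2} applied to $\widehat{\g}$ in place of $\g$ is legitimate, since $\widehat{\g}$ is either discrete Kac (when $\g$ is compact Kac) or coamenable compact Kac (when $\g$ is amenable discrete Kac). By Pontryagin duality the lemma's $\widehat W$ becomes $W_\g$ and its $R$ becomes $\widehat R$, so the conclusion reads
\[
W(\widehat R \otimes \mathrm{id}) W(\widehat R \otimes \mathrm{id}) (|\xi_j\rangle \otimes 1) = |\xi_j\rangle \otimes 1
\]
for every $j$. The limit hypothesis of Theorem \ref{thm1} is thus satisfied term-by-term; Theorem \ref{thm1} gives complete surjectivity of $m_2$, and Proposition \ref{prop1} then delivers $d_{cb}(L^1(\g)) \leq 2$.

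The main obstacle is the construction of a suitable symmetric approximate identity and the duality bookkeeping around it: one must verify simultaneously that the vectors $\xi_j$ generate a contractive approximate identity in $L^1(\widehat{\g})$, that they lie in $\mathfrak{n}_{\widehat{\varphi}}$, and that they satisfy the flip-symmetry $\widehat{\Delta}(\xi_j) = (\sigma \circ \widehat{\Delta})(\xi_j)$. The symmetry itself reduces to the elementary relabeling identity $\sum_{i,j} u^\alpha_{ij} \otimes u^\alpha_{ji} = \sum_{i,j} u^\alpha_{ji} \otimes u^\alpha_{ij}$ for irreducible characters in the compact Kac case, with a parallel argument for discrete Kac; combining this with the approximate-identity and integrability requirements, together with the identification $\widehat{W}_{\widehat{\g}} = W_\g$, is the most delicate point.
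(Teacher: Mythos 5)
Your overall architecture is exactly the paper's: take the Day--Dixmier property from Section \ref{section:positive}, apply Lemma \ref{prop2} with $\widehat{\g}$ in place of $\g$ (so that by Pontryagin duality $\widehat{\widehat{W}}=W$ and the relevant antipode is $\widehat{R}$, a unitary on $L^2(\widehat{\g})$ in the Kac case), feed the resulting exact fixed-vector identity into Theorem \ref{thm1} with $T_1=T_2=\widehat{R}$, and conclude via Proposition \ref{prop1} with $N=2$. Your treatment of the amenable discrete Kac case (vectors in the span of the characters $\chi_\alpha$ of the co-amenable compact dual, flip-symmetry of $\widehat{\Delta}(\chi_\alpha)=\sum_{j,k}u^\alpha_{j,k}\otimes u^\alpha_{k,j}$ by relabeling) coincides with the paper's.

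The gap is in the compact Kac case, precisely at the point you yourself flag as delicate. You propose vectors built from \emph{finite sums of normalized minimal central projections} of $L^\infty(\widehat{\g})=\prod_\alpha B(H_\alpha)$, but (i) since $\widehat{\g}$ is discrete, $L^1(\widehat{\g})$ is unital with unit $w_{E^0_{0,0},E^0_{0,0}}$ (the counit), and any contractive approximate identity must converge in norm to that unit; for $\xi_F$ proportional to $\sum_{\alpha\in F}p_\alpha$ one has $w_{\xi_F,\xi_F}\bigl(\sum_{\alpha\in F,\,\alpha\neq 0}p_\alpha\bigr)=\bigl(\sum_{\alpha\in F,\,\alpha\neq 0}n_\alpha^2\bigr)/\bigl(\sum_{\alpha\in F}n_\alpha^2\bigr)$, which stays away from $0=\widehat{\epsilon}\bigl(\sum_{\alpha\in F,\,\alpha\neq0}p_\alpha\bigr)$ unless the sum reduces to the trivial block, so such states are not an approximate identity; and (ii) the cocentrality $\widehat{\Delta}(\xi)=(\sigma\circ\widehat{\Delta})(\xi)$ is not automatic for a minimal central projection $p_\gamma$ with $\gamma$ nontrivial: its $(\alpha,\beta)$-component is the $\gamma$-isotypic projection of $H_\alpha\otimes H_\beta$, and when $\g$ is not cocommutative the flip $\Sigma$ is not an intertwiner between $\beta\otimes\alpha$ and $\alpha\otimes\beta$, so there is no reason these projections correspond under $\Sigma$; you give no argument. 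The repair is what the paper does: since $L^1(\widehat{\g})$ is unital, no net is needed at all --- take the single vector $E^0_{0,0}$ and verify its cocentrality from the explicit Kac-type formula $\widehat{\Delta}(E^0_{0,0})=\sum_{\alpha}n_\alpha^{-1}\sum_{i,j}E^\alpha_{i,j}\otimes E^{\overline{\alpha}}_{i,j}$, which is flip-symmetric after relabeling $\alpha\mapsto\overline{\alpha}$. With that single substitution your argument closes and is the paper's proof.
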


\begin{proof}
In view of Lemma \ref{prop2} and Theorem \ref{thm1}, it is sufficient to show that, in both cases, there exists a net $(\xi_i)_i\subseteq \mathfrak{n}_{\widehat{\varphi}}\subseteq L^2(\widehat{\g})$ such that  $\left \|\xi_i\right\|_{L^2(\widehat{\g})}=1$, $\widehat{\Delta}(\xi_i)=\displaystyle \sum (\xi_i)_{(2)}\otimes (\xi_i)_{(1)}$ for all $i$ and $(w_{\xi_i,\xi_i})_i$ is a contractive approximate identity of $L^1(\widehat{\g})$.

For the case of $\g$ a compact quantum group of Kac type, the unit of $L^1(\widehat{\g})$ is given by $w_{E^0_{0,0},E^0_{0,0}}=\widehat{\varphi}(\cdot E^0_{0,0})$ when we write 
\[ L^{\infty}(\widehat{\g})=\displaystyle \ell^{\infty}-\bigoplus_{\alpha\in \mathrm{Irr}(\g)}M_{n_{\alpha}}=\ell^{\infty}-\bigoplus_{\alpha\in \mathrm{Irr}(\g)}\mathrm{span}\left \{E^{\alpha}_{i,j}:1\leq i,j\leq n_{\alpha}\right\} .\] Here, $\alpha=0$ means the trivial representation. Furthermore, we have
\[\widehat{\Delta}(E^0_{0,0})=\sum (E^0_{0,0})_{(1)}\otimes (E^0_{0,0})_{(2)}=\sum_{\alpha \in \mathrm{Irr}(\g)}\sum_{i,j=1}^{n_{\alpha}}\frac{1}{n_{\alpha}}E^{\alpha}_{i,j}\otimes E^{\overline{\alpha}}_{i,j}\]
where $\overline{\alpha}$ is the conjugate of $\alpha$. Hence $\widehat{\Delta}(E^0_{0,0})=\displaystyle \sum (E^0_{0,0})_{(2)}\otimes (E^0_{0,0})_{(1)}$.

Secondly, let us suppose that $\g$ is an amenable discrete quantum group of Kac type. It is equivalent to that $\widehat{\g}$ is a co-amenable compact quantum group of Kac type \cite{Ruan96},\cite{To06}. In this case, a contractive approximated identity of $L^1(\widehat{\g})$ is given as the form $w_{\xi_i,\xi_i}$ with $\xi_i\in \mathrm{span}\left \{\chi_{\alpha}:\alpha\in \mathrm{Irr}(\g)\right\}$ where $\chi_{\alpha}=\displaystyle \sum_{i=1}^{n_{\alpha}}u^{\alpha}_{i,i}$ for each $\alpha\in \mathrm{Irr}(\g)$ \cite{Br16},\cite{KrRu99}. Moreover, for $\xi_i=\displaystyle \sum_{\alpha\in \mathrm{Irr}(\g)}c^i_{\alpha}\chi_{\alpha}$, we have
\[\widehat{\Delta}(\xi_i)=\sum_{\alpha\in \mathrm{Irr}(\g)}c^i_{\alpha}\sum_{j,k=1}^{n_{\alpha}}u^{\alpha}_{j,k}\otimes u^{\alpha}_{k,j}.\]
Hence again we have $\widehat{\Delta}(\xi_i)=\displaystyle \sum (\xi_i)_{(2)}\otimes (\xi_i)_{(1)}$.
\end{proof}

\subsection{Example 2: Some Fourier algebras of crossed products}

Suppose that $N$ is discrete and $H$ is amenable. Also, we choose a net $(f_i)_i\subseteq L^2(H)_+$ such that \[\displaystyle \int_{H} \left |f_i(h_0h)-f_i(h)\right |^2dh\rightarrow 0 \quad \text{uniformly for $h_0$ on compact subsets of $H$}. \]

Now the operator $T:=\widehat{W}(\tau_2\circ (J_H\otimes J_N)\otimes \mathrm{id}\otimes \mathrm{id})\widehat{W}(\tau_2\circ (J_H\otimes J_N)\otimes \mathrm{id}\otimes \mathrm{id})\in B(L^2(H\times N\times H\times N))$ is computed by
\begin{align*}
&(\widehat{W}(\tau_2\circ (J_H\otimes J_N)\otimes \mathrm{id}\otimes \mathrm{id})\widehat{W}(\tau_2\circ (J_H\otimes J_N)\otimes \mathrm{id}\otimes \mathrm{id})f)(h_1,n_1,h_2,n_2)\\
&=f(h_1,\alpha_{h_1^{-1}h_2}(n_2)n_1\alpha_{h_1^{-1}h_2}(n_2^{-1}),h_2,n_2) 
\end{align*}
for any $f\in L^2(H\times N\times H\times N)$.

In particular,
\begin{align*}
\left |T^{i,j}\right|^2&:=\left |T (|f_i\rangle\otimes |\chi_{e_N}\rangle\otimes 1)-|f_i\rangle \otimes |\chi_{e_N}\rangle \otimes 1\right|^2=0~\mathrm{in~} B(L^2(H\times N))
\end{align*}
In other words, for any $\xi\in L^2(H\times N)$, $|f_i\rangle\otimes |\chi_{e_N}\rangle\otimes \xi$ is a fixed vector for the operator $T$, so that we get the similarity result.

\begin{corollary}
Suppose that $N$ is discrete and $H$ is amenable. Then the dual $\widehat{\g}$ of the crossed product $\g=(L^{\infty}(N)\rtimes_{\alpha} H,\Delta)$ has the Day-Dixmier property with $d_{cb}(A(\g))=d_{cb}(L^1(\widehat{\g}))\leq 2$.
\end{corollary}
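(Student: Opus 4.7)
My plan is to invoke Theorem \ref{thm1} followed by Proposition \ref{prop1}, applied with $\widehat\g$ playing the role of ``$\g$'' in those two statements. Under this swap of roles, the multiplicative unitary ``$W$'' in Theorem \ref{thm1} becomes $\widehat W$, the contractions $T_1,T_2$ are to be chosen in $B(L^2(\g))=B(L^2(H\times N))$, and the contractive approximate identity must live in $L^1(\g)$. For the c.a.i.\ I take the net $(w_{f_i\otimes\chi_{e_N},\, f_i\otimes\chi_{e_N}})_i\subseteq L^1(\g)$ supplied by Proposition \ref{prop:bai}, where $(f_i)_i\subseteq L^2(H)_+$ is a Folner-type net witnessing amenability of $H$. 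For the two contractions I take $T_1=T_2:=\tau_2\circ(J_H\otimes J_N)$, already singled out in the paragraph preceding the corollary; these are genuine isometries on $L^2(H\times N)$, since $J_H\otimes J_N$ is antiunitary and $\tau_2$ is the modular rescaling of $\tau$ fashioned in the preliminaries to be isometric.

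The heart of the argument is the kernel identity
\[
\bigl(\widehat W(T_1\otimes\mathrm{id})\widehat W(T_2\otimes\mathrm{id})f\bigr)(h_1,n_1,h_2,n_2)
=f\bigl(h_1,\ \alpha_{h_1^{-1}h_2}(n_2)\,n_1\,\alpha_{h_1^{-1}h_2}(n_2^{-1}),\ h_2,n_2\bigr),
\]
which I would derive by substituting $\widehat W=\Sigma W^*\Sigma$ together with the explicit formula for $W^*$ recalled in the subsection on crossed product quantum groups. The key observation is then that on a product vector $f=f_i\otimes\chi_{e_N}\otimes\xi$, the conjugated element $\alpha_{h_1^{-1}h_2}(n_2)\,n_1\,\alpha_{h_1^{-1}h_2}(n_2^{-1})$ equals $e_N$ if and only if $n_1=e_N$, so the $\chi_{e_N}$-factor is preserved verbatim and the operator fixes $|f_i\rangle\otimes|\chi_{e_N}\rangle\otimes 1$. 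Consequently the operator $\widehat W(T_1\otimes\mathrm{id})\widehat W(T_2\otimes\mathrm{id})(|f_i\rangle\otimes|\chi_{e_N}\rangle\otimes 1)-|f_i\rangle\otimes|\chi_{e_N}\rangle\otimes 1$ vanishes identically for every $i$, so the limit hypothesis of Theorem \ref{thm1} is trivially satisfied for every positive $\mu\in M(\g)_+$.

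Theorem \ref{thm1} then delivers complete surjectivity of $m_2:L^1(\widehat\g)\otimes_h L^1(\widehat\g)\to C_0(\g)$, and I close by invoking Proposition \ref{prop1} with $N=2$. Its two standing hypotheses are met: the dual of $\widehat\g$, namely $\g$, is co-amenable by the characterization recorded in the preliminaries on crossed products (since $H$ is amenable), and $\widehat\g$ has the Day-Dixmier property by Theorem \ref{thm:tracial-mean} of Section \ref{section:positive}. This gives the desired bound $d_{cb}(A(\g))=d_{cb}(L^1(\widehat\g))\le 2$. The one substantive obstacle is the kernel computation above; the remainder is a matter of matching conventions across Pontryagin duality.
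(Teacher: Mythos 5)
Your proposal is correct and follows essentially the same route as the paper's own argument: the same choice $T_1=T_2=\tau_2\circ(J_H\otimes J_N)$, the same kernel identity showing that $\widehat W(T_1\otimes\mathrm{id})\widehat W(T_2\otimes\mathrm{id})$ fixes the vectors $|f_i\rangle\otimes|\chi_{e_N}\rangle\otimes\xi$ exactly, the same c.a.i.\ from Proposition \ref{prop:bai}, and the same concluding chain through Theorem \ref{thm1} and Proposition \ref{prop1} (with the Day--Dixmier property of $\widehat\g$ supplied by Theorem \ref{thm:tracial-mean} and co-amenability of $\g$ by amenability of $H$). The only step you leave to be checked, the kernel computation, is exactly the identity the paper itself asserts without further detail.
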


\subsection{The case of completely bounded similarity degree 1}

One might wonder when $d_{cb}(L^1(\g))=1$ happens. As in \cite{Pi98}, \cite{Sp02}, \cite{LeSaSp16}, it is reasonable to conjecture that $d_{cb}(L^1(\g))=1$ if and only if the underlying quantum group $\g$ is finite, i.e. $L^{\infty}(\g)$ is finite dimensional.

Indeed, if $\g$ has the Day-Dixmier property with $d_{cb}(L^1(\g))=1$, then the map $m_1:L^1(\g)\rightarrow \widetilde{A}_1=C_0^u(\widehat{\g})$ becomes an isomorphism by [Theorem 4.2.9, \cite{Sp02}]. Note that $m_1$ is nothing but the universal Fourier transform $\lambda^u$ and $L^1(\g)$ has a bounded approximate identity.

Since $\g$ is co-amenable and $L^1(\g)$ is Arens regular, $\g$ is discrete by [Theorem 3.10, \cite{HuNeRu12}]. Then the surjectivity of $m_1=\lambda^u:L^1(\g)\rightarrow C_0^u(\widehat{\g})$ implies $\g$ is finite. In conclusion, a locally compact quantum group $\g$ has the Day-Dixmier property with $d_{cb}(L^1(\g))=1$ if and only if $\g$ is finite.

\bibliographystyle{alpha}
\bibliography{Dixmier.LCQG-revised}

\end{document}